\documentclass[reqno,12pt]{amsart}
\usepackage{hyperref}
\usepackage{amsfonts}
\usepackage{amssymb}
\usepackage{graphicx}
\usepackage{pstricks}
\usepackage{amsmath}
\usepackage{amsxtra}
\usepackage{url}

\usepackage{cite}

\linespread{1.3}

\usepackage{mathrsfs, amsthm, xifthen, verbatim, mathrsfs}
\usepackage{ulem}

\usepackage{moreverb}

\setlength{\topmargin}{0.3in}
\setlength{\evensidemargin}{0.22in}
\setlength{\oddsidemargin}{0.22in}
\setlength{\marginparwidth}{0in}
\setlength{\marginparsep}{0pt}
\setlength{\textwidth}{6.0in}
\setlength{\textheight}{8.3in}
\usepackage{amsmath,amssymb,amsfonts,epsfig,color}

\usepackage{multicol}

\definecolor{DarkBlue}{rgb}{0,0.2,0.6}
\definecolor{PinkPurple}{rgb}{0.8,0.3,0.3}
\definecolor{darkgreen}{rgb}{.1,.5,0}
\definecolor{brown}{rgb}{.4,.2,.1}

\newtheorem{thm}{Theorem}[section]
\newtheorem{prop}[thm]{Proposition \!\!}
\newtheorem{cor}[thm]{Corollary \!\!}
\newtheorem{lem}[thm]{Lemma \!\!}
\newtheorem{ex}[thm]{Example \!\!}
\newtheorem{remark}[thm]{Remark \!\!}

\newtheorem{definition}[thm]{Definition}
\newtheorem{question}[thm]{Question \!\!}

\def\ker{\operatorname{ker}}

\newcommand\ddfrac[2]{\frac{\displaystyle #1}{\displaystyle #2}}

\numberwithin{equation}{section}

\begin{document}

\title{Moment Infinite Divisibility of Weighted Shifts: Sequence Conditions}
\author{Chafiq Benhida}
\address{UFR de Math\'{e}matiques, Universit\'{e} des Sciences et
Technologies de Lille, F-59655 \newline Villeneuve-d'Ascq Cedex, France}
\email{chafiq.benhida@univ-lille.fr}

\author{Ra\'ul E. Curto}
\address{Department of Mathematics, University of Iowa, Iowa City, Iowa 52242-1419, USA}
\email{raul-curto@uiowa.edu}

\author{George R. Exner}
\address{Department of Mathematics, Bucknell University, Lewisburg, Pennsylvania 17837, USA}
\email{exner@bucknell.edu}

\subjclass[2010]{Primary 47B20, 47B37, Secondary 44A60.}

\keywords{Weighted shift, Subnormal, Moment infinitely divisible, Completely monotone sequence, Completely alternating sequence, Aluthge transform}

\begin{abstract}
We consider weighted shift operators having the property of moment infinite divisibility; that is, for any $p > 0$, the shift is subnormal when every weight (equivalently, every moment) is raised to the $p$-th power. \  By reconsidering sequence conditions for the weights or moments of the shift, we obtain a new characterization for such shifts, and we prove that such shifts are, under mild conditions, robust under a variety of operations and also rigid in certain senses. \  In particular, a weighted shift whose weight sequence has a limit is moment infinitely divisible if and only if its Aluthge transform is. \  We also consider back-step extensions, subshifts, and completions.

\end{abstract}

\maketitle

\tableofcontents

\setcounter{tocdepth}{2}


\section{Introduction and Preliminaries} \label{Intro}

Let $\mathcal{H}$ denote a separable, complex Hilbert space and $\mathcal{L}(\mathcal{H})$ be the algebra of bounded linear operators on $\mathcal{H}$. \  Recall that an operator $T$ is subnormal if it is the restriction to a (closed) invariant subspace of a normal operator, and hyponormal if $T^* T \geq T T^*$. \  We are concerned in this paper with (unilateral) weighted shift operators $W_\alpha$ on the classical sequence space $\ell^2$, with weight sequence $\alpha: \alpha_0, \alpha_1, \ldots$, which are not only subnormal, but which have the stronger property that, for each $p \geq 0$, the shift $W_{\alpha}^{(p)}$ corresponding to the weight sequence $\alpha^{(p)}: \alpha_0^p, \alpha_1^p, \ldots$ remains subnormal. \  Such shifts are called \textit{moment infinitely divisible} (see \cite{BCE} for an initial study and justification of this terminology), and we write $\mathcal{MID}$ for the class of such shifts. \  

In this paper we consider approaches to this problem based on properties of the sequence of weights or the sequence of moments (definitions reviewed below). \  These approaches fall in the Agler-Embry or ``$n$-contractivity'' approach to subnormality of contraction operators; in a subsequent paper we will consider approaches rooted in the Bram-Halmos ``$k$-hyponormality'' approach to subnormality.

The organization of this paper is as follows. \  In Section \ref{mainresults} we present the main results of this paper, and their relevance in and connections with the existing literature. \ In the remainder of this section we shall give notation and terminology and certain needed or motivational background results. \  In Section \ref{Sect2} we will first modify a result from \cite{BCE} to give a new characterization of moment infinite divisibility. \ In Section \ref{Sect3} we will present some consequences of the tools in Section \ref{Sect2}, including in particular discussion of the Aluthge transform and the result that under mild hypotheses a weighted shift is $\mathcal{MID}$ if and only if its Aluthge transform is.

To set the notation for weighted shifts, let $\mathbb{N}_0 = \{0, 1, \ldots\}$ and denote by $\ell^2$ the classical Hilbert space $\ell^2(\mathbb{N}_0)$ with canonical basis $e_0, e_1, \ldots$ (note that we begin indexing at zero). \  Let $\alpha: \alpha_0, \alpha_1, \ldots$ be a (bounded) weight sequence and let $W_\alpha$ be the weighted shift defined by its action on the basis: $W_\alpha e_j = \alpha_j e_{j+1}$ and the assumption of linearity. \  For all questions of interest in this paper, it is without loss of generality to assume that the weight sequence $\alpha$ is positive, and we do henceforth without further mention. \ The moments $\gamma = (\gamma_n)_{n=0}^\infty$ of the shift are defined by $\gamma_0 = 1$ and $\gamma_n = \prod_{j=0}^{n-1} \alpha_j^2$ for $n \geq 1$. \ We recall (see \cite[III.8.16]{Con},\cite{GW}) that a weighted shift $W_\alpha$ is subnormal if and only if it has a Berger measure, meaning a probability measure $\mu$ supported on $[0, \|W_\alpha\|^2]$ and satisfying
$$
\gamma_n = \int_0^{\|W_\alpha\|^2} t^n d \mu(t), \hspace{.2in} n = 0, 1, \ldots .
$$

Applying the Cauchy-Schwarz inequality in $L^2(\mu)$ to the monomials $s^{n/2}$ and $s^{(n+2)/2}$, one obtains $\gamma_{n+1}^2 \le \gamma_n \gamma_{n+2} \; (n \ge 0)$, and therefore $\alpha_n^2 \le \alpha_{n+1}^2 \; (n \ge 0)$. \ Recall that an operator $T$ satisfying $T^*T-TT^* \ge 0$ is called hyponormal, and it is readily seen that for a weighted shift $W_{\alpha}$ this is exactly $\alpha_n^2 \le \alpha_{n+1}^2$ for all $n \ge 0$.  

The Agler-Embry characterization of subnormality for a contractive operator $T$ (that is, $\|T \| \leq 1$) is based on the notion of $n$-contractivity (see \cite{Ag}). \  An operator is $n$-contractive, $n = 1, 2, \ldots$, if
\begin{equation} \label{eq11}
A_n(T):=\sum_{i=0}^n (-1)^i \binom{n}{i} {T^*}^i T^i \geq 0.
\end{equation}
The characterization is that a contractive operator is subnormal if and only if it is $n$-contractive for all positive integers $n$. \ It is well known, and simply a computation following from the fact that ${W_\alpha^*}^i W_\alpha^i$ is diagonal, that for a shift it suffices to test this condition on basis vectors and that a weighed shift is $n$-contractive if and only if
$$
\sum_{i=0}^n (-1)^i \binom{n}{i} \gamma_{k+i} \geq 0, \qquad k = 0, 1, \ldots.
$$
Let $\nabla$ be the forward difference operator on sequences, that is, for any sequence $a = (a_j)_{j=0}^\infty$,  
$$
(\nabla a)_j := a_j - a_{j+1},
$$
and define the iterated forward difference operators $\nabla^{n}$ by 
$$
\nabla^{0} a := a \; \; \textrm{ and } \; \; \nabla^{n} := \nabla (\nabla^{n-1}),
$$
for $n \geq 1$. \ To ease the notation slightly in some settings, set, for any $n \geq 1$ and $k \geq 0$,
$$
T_a(n,k) := (\nabla^{n} a)_k = \sum_{i=0}^n (-1)^i \binom{n}{i} a_{i+k}
$$
(where we will suppress the dependence on the sequence $a$ and write simply $T(n,k)$ if no confusion will arise). \ With a slight abuse of language say that a sequence $a$ is $n$-contractive if $T_a(n,k) \geq 0$ for all $k = 0, 1, \ldots$. \  We will similarly employ
$$
LT_a(n,k) := (\nabla^{n} \ln a)_k = \sum_{i=0}^n (-1)^i \binom{n}{i} \ln a_{i+k}
$$
when we wish to consider properties arising from the logs of the sequence terms. \ 
In the case of a weighted shift $W_{\alpha}$, it is possible to establish, via mathematical induction on $n$, that  
\begin{equation} \label{eq1}
LT_{\gamma}(n+1,k)=-LT_{\alpha^2}(n,k)=-2LT_{\alpha}(n,k),
\end{equation}
where $\gamma$ denotes the sequence of moments, and $\alpha^2$ the sequence of weights squared. \ (The induction step requires the binomial identity $\binom{n+2}{i}=\binom{n+1}{i}+\binom{n+1}{i-1}$ and the fact that $\ln \gamma_{i+k+1}=\ln \gamma_{i+k} + \ln \alpha_{i+k}^2$.)

There is alternative language for these and related concepts:  a sequence $a$ is $n$--monotone if $T_a(n,k) = (\nabla^{n} a)_k \geq 0$ for all $k = 0, 1, \ldots$, $n$--hypermonotone if it is $j$--monotone for all $j = 1, \ldots, n$, and completely monotone if it is $n$--monotone for all $n = 1, 2, \ldots$. \  A sequence is $n$--alternating if $T_a(n,k) = (\nabla^{n} a)_k \leq 0$ for all $k = 0, 1, \ldots$, $n$--hyperalternating if it is $j$--alternating for all $j = 1, \ldots, n$, and completely alternating if it is $n$--alternating for all $n = 1, 2, \ldots$. \  We shall say that a sequence is $n$--log monotone (respectively, completely log monotone, $n$--log alternating, completely log alternating) if the sequence $(\ln a_j)$ is $n$--monotone (respectively, completely monotone, $n$-alternating, completely alternating).

Recall that an operator $T$ has a polar decomposition $T = U |T|$, where $|T| = (T^* T)^{1/2}$ and $U$ is a partial isometry satisfying the kernel condition $\ker U = \ker |T| = \ker T$. \  The much-studied Aluthge transform of $T$ (see, for example, \cite{Alu,APS,DySc,JKP1,Ya}) is $AT(T) := |T|^{1/2} U |T|^{1/2}$, and we define the iterated Aluthge transform $AT^{(n)}$ in the obvious way. \  It is merely a computation that if $W_\alpha$ is a weighted shift then $AT(W_\alpha)$ is again a weighted shift with weight sequence
$$
\sqrt{\alpha_0 \alpha_1}, \sqrt{\alpha_1 \alpha_2}, \ldots.
$$

It is useful to note, in considering $\mathcal{MID}$ weighted shifts, that raising every weight to the $p$-th power is equivalent to raising every moment to the $p$-th power. \ We refer the reader to \cite{BCE} for an initial study of moment infinite divisibility;  this paper constitutes a continuation of that study.

We will reference the $k$-hyponormality approach to subnormality only in passing, but for the convenience of the reader we give a very abbreviated version of this background (see \cite{Cu1} for a full discussion). \ Recall that it is the Bram-Halmos characterization of subnormality (see \cite{Br}) that an operator $T$ is subnormal if and only if, for every $k = 1, 2, \ldots$, a certain $(k+1) \times (k+1)$ operator matrix $A_n(T)$ is positive. \ Given $k \ge 1$, an operator is $k$-hyponormal whenever this positivity condition holds for $k$. \ For weighted shifts, it is well known that $k$-hyponormality reduces to the positivity, for each $n$, of the $(k+1) \times (k+1)$ Hankel moment matrix $A(n,k)$ \cite[Theorem 4]{Cu}, where
$$A(n,k) = \left(
\begin{array}{cccc}
\gamma _{n} & \gamma _{n+1} & \cdots & \gamma _{n+k} \\
\gamma _{n+1} & \gamma _{n+2} & \cdots & \gamma _{n+k+1} \\
\vdots & \vdots & \ddots & \vdots \\
\gamma _{n+k} & \gamma _{n+k+1} & \cdots & \gamma _{n+2k}%
\end{array}
\right). $$
The Bram-Halmos/$k$-hyponormality approach to subnormality has been more thoroughly studied than the Agler-Embry/$n$-contractivity approach; see \cite{Cu1} and \cite{CF1} for some of the beginnings of this considerable body of work.


\section{Description of Main Results} \label{mainresults}

In this section we list the main results of this paper, and indicate how they connect with results in the existing literature. \ First, recall that a sequence $\gamma$ of {\it moments} is log completely monotone if $LT_{\gamma}(n,k) \equiv \sum_{i=0}^n (-1)^i \binom{n}{i} \ln \gamma_{i+k} \ge 0$ for all $n \ge 1$ and $k \ge 0$. \ On the other hand, a sequence $\alpha^2$ of {\it weights squared} is completely alternating if $T_{\alpha^2}(n,k) \equiv \sum_{i=0}^n (-1)^i \binom{n}{i} \alpha_{i+k}^2 \le 0$ for all $n \ge 1$ and $k \ge 0$. \ It is well known that a general sequence $\psi$ is completely alternating if and only if the sequence $\varphi_t:=e^{-t \psi}$ is completely monotone for all $t>0$ \cite[Prop. 6.10]{BCR}. \ In \cite[Theorem 3.1]{BCE} we proved that a contractive weighted shift $W_{\alpha}$ is $\mathcal{MID}$ if and only if the sequence of weights squared $\{\alpha_n^2\}$ is log completely alternating. \ 

Our first main result is Proposition \ref{prop:wtslogCAvsmomlogCM} (and its companion Theorems \ref{thm24} and \ref{thm25}), asserting that 
$$
\alpha^2 \textrm{ is log completely alternating} \Longleftrightarrow \gamma^{(\alpha)} \textrm{ is log completely monotone},
$$
where $\gamma^{(\alpha)}$ is the moment sequence of $\alpha$. \ Thus, we establish a two-way bridge between weights squared sequences and the associated moment sequences, not known before. \ This is represented by the double-arrow in the diagram on page \pageref{diagram}.

Next, we substantially expand the collection of sequences which are known to be log completely alternating by proving, in Theorem \ref{cor:simplyweightquotient}, that a non-decreasing sequence $a$ is log completely alternating if and only if the quotient sequence $\{ b_n:=\frac{a_n}{a_{n+1}} \}$ is log completely alternating. \ With these results in hand, we proceed to establish a link with the Aluthge transform: given a contractive weighted shift $W_{\alpha}$ such that $\lim_{n \rightarrow \infty} \alpha_n$ exists (so, in particular, if $W_{\alpha}$ is hyponormal), then  
$$
AT(W_{\alpha}) \in \mathcal{MID} \Leftrightarrow W_{\alpha} \in \mathcal{MID}.
$$
This provides a new way to determine whether a hyponormal contractive weighted shift is $\mathcal{MID}$. \ In Corollary \ref{cor45} we prove that, in fact, any of the iterated Aluthge transforms of $W_{\alpha}$ can be used instead of $AT(W_{\alpha})$. \ We also complete a full circle of equivalent conditions, by establishing that for contractive hyponormal weighted shifts, 
$$
W_{\alpha}^{(1/2)} \in \mathcal{MID} \Leftrightarrow W_{\alpha} \in \mathcal{MID} \Leftrightarrow AT(W_{\alpha}) \in \mathcal{MID},
$$
where $W_{\alpha}^{(1/2)}$ is the Schur square root of $W_{\alpha}$. \ We do this in Corollary \ref{cor47}, and bring closure to a question in \cite{CuEx}.

Our next main result is Theorem \ref{prop:ATbijective}, in which we prove that the Aluthge transform, when restricted to the class $\mathcal{MID}$, is a bijection. \ This means, in particular, that every Agler shift $A_k$ is the Aluthge transform of a contractive weighted shift. \ In Proposition \ref{prop417}, we calculate the precise form of $(AT)^{-1}(A_k)$. \ (Recall that the $k$-th Agler shift $A_k$ is that with weight sequence $\{\alpha(k)_n\}_{n=0}^\infty$ with $\alpha(k)_n = \sqrt{\frac{n+1}{n+k}}$ for $k = 2, 3, \ldots$ and $n = 0, 1, \ldots$.) 

In Proposition \ref{prop:LTeq0Flat}, we establish a flatness result for the class $\mathcal{MID}$. \ Concretely, we prove that if a contractive weighted shift $W_{\alpha}$ is $\mathcal{MID}$, and for some $n \ge 1$ and $k \ge 0$ one has $LT_{\alpha}(n,k)=0$, then $W_{\alpha}$ is flat; that is, $\alpha_0 \le \alpha_1 = \alpha_2 = \alpha_3 = \cdots.$ 

Along the way we present operations on sequences which involve quotients, Aluthge transforms, generalized Aluthge transforms, limits, and subshifts; our study includes several examples of $\mathcal{MID}$ unilateral weighted shifts, even some whose subnormality was not previously known.

Although we do not pursue this point of view here, each $\mathcal{MID}$ weighted shift yields, in the Hankel matrices of its moments, examples of infinitely divisible matrices (see \cite{Bha}), some of which are also new.


\section{Sequence Conditions for Moment Infinite Divisibility} \label{Sect2}

We turn shortly to proving some results comparing various properties for a real sequence (in the sequel, usually to be a sequence of weights or moments, or their logs, of a shift). \ We begin with one fundamental to the point of view of this paper, and deduce a new characterization of moment infinite divisibility for weighted shifts.  

Our starting point is a characterization of moment infinite divisibility for contractive weighted shifts in terms of sequences (cf. \cite{BCE}).

\begin{thm}\cite[Theorem 3.1]{BCE} \label{th:BCEmomlogID}
A contractive weighted shift $W_\alpha$ is moment infinitely divisible (in symbols, $W_{\alpha} \in \mathcal{MID}$) if and only if the sequence of weights squared (or, equivalently, the sequence of weights) is log completely alternating.
\end{thm}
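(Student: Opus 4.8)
The plan is to route everything through the moment sequence and then exploit the exponential duality between completely monotone and completely alternating sequences. First I would note that, since $W_\alpha$ is contractive, every weight satisfies $\alpha_j \le 1$, so for each $p > 0$ the power shift $W_\alpha^{(p)}$ (with weights $\alpha_j^p \le 1$) is again contractive, and its moment sequence is exactly $(\gamma_n^p)_{n\ge 0}$, because $\gamma_n^{(p)} = \prod_{j=0}^{n-1}(\alpha_j^p)^2 = \gamma_n^p$. By the Agler--Embry characterization recalled above, a contractive weighted shift is subnormal if and only if it is $n$-contractive for every $n$, which for shifts is precisely complete monotonicity of its moment sequence (that is, $T_\gamma(n,k) \ge 0$ for all $n \ge 1$, $k \ge 0$). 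Applying this to each $W_\alpha^{(p)}$ gives
\begin{equation*}
W_\alpha \in \mathcal{MID} \quad\Longleftrightarrow\quad (\gamma_n^p)_{n \ge 0} \text{ is completely monotone for every } p > 0 .
\end{equation*}

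The second step collapses this family of conditions, one per $p$, into a single condition. Writing $\gamma_n^p = e^{-p\,\psi_n}$ with $\psi_n := -\ln \gamma_n$ (which is nonnegative and nondecreasing since $\gamma_{n+1} = \gamma_n \alpha_n^2 \le \gamma_n \le 1$), the right-hand side above says exactly that $(e^{-p\psi_n})_n$ is completely monotone for all $p > 0$. This is precisely the hypothesis of the duality from \cite[Prop.~6.10]{BCR} quoted in Section~\ref{mainresults}, whose conclusion is that $\psi = (\psi_n)$ is completely alternating. Since $T_\psi(n,k) = -LT_\gamma(n,k)$, complete alternation of $\psi$ is the same as $LT_\gamma(n,k) \ge 0$ for all $n \ge 1$, $k \ge 0$, i.e. log complete monotonicity of $\gamma$. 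Thus $W_\alpha \in \mathcal{MID}$ if and only if the moment sequence $\gamma$ is log completely monotone.

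Finally I would transfer the statement from $\gamma$ to the weights squared using the identity \eqref{eq1}, namely $LT_\gamma(n+1,k) = -LT_{\alpha^2}(n,k) = -2\,LT_\alpha(n,k)$. The instances $LT_\gamma(m,k) \ge 0$ with $m \ge 2$ are equivalent to $LT_{\alpha^2}(n,k) \le 0$ for all $n \ge 1$, $k \ge 0$, i.e. to log complete alternation of $\alpha^2$, while the single remaining instance $m = 1$ gives $LT_\gamma(1,k) = -\ln \alpha_k^2 \ge 0$, which is merely the contractivity $\alpha_k \le 1$ already assumed. Hence, under the standing contractivity hypothesis, $\gamma$ is log completely monotone if and only if $\alpha^2$ is log completely alternating. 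The parenthetical ``equivalently'' clause then follows because $\ln \alpha^2 = 2 \ln \alpha$ and complete alternation is preserved under multiplication by the positive constant $2$, so $(\ln\alpha_j^2)$ is completely alternating if and only if $(\ln\alpha_j)$ is.

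The power-moment identity and the index bookkeeping in the last step are immediate; the genuine structural input is the observation that demanding subnormality after raising every moment to the $p$-th power, for all $p$, is exactly the completely-monotone side of the BCR duality. I expect the main obstacle to be this recognition together with the boundary accounting: one must separate the $m = 1$ instance of log complete monotonicity of $\gamma$ (absorbed into the contractivity hypothesis) from the $m \ge 2$ instances (which encode log complete alternation of $\alpha^2$), and confirm that the quantifier ``for every $p > 0$'' matches the ``for all $t > 0$'' required in \cite[Prop.~6.10]{BCR}.
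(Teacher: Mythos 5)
Your argument is correct. Note, however, that the paper itself offers no proof of this statement: it is imported verbatim as \cite[Theorem 3.1]{BCE} and used as the starting point of Section \ref{Sect2}. What you have written is therefore not a variant of the paper's proof but a self-contained reconstruction, and it is worth observing that your route inverts the paper's logical order. The paper takes the weight-level characterization as given and then deduces the moment-level one (Proposition \ref{prop:wtslogCAvsmomlogCM} and Theorem \ref{thm24}) from it; you instead prove the moment-level characterization first --- combining the Agler--Embry criterion, the identity $\gamma_n^{(p)}=\gamma_n^p$, and the duality of \cite[Prop.\ 6.10]{BCR} to get $W_\alpha\in\mathcal{MID}$ iff $\gamma$ is log completely monotone --- and only then descend to the weights via \eqref{eq1}. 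Your bookkeeping at the boundary is right: the $m\ge 2$ instances of $LT_\gamma(m,k)\ge 0$ correspond bijectively to the $n\ge 1$ instances of $LT_{\alpha^2}(n,k)\le 0$, and the $m=1$ instance is exactly contractivity, so the hypothesis $\|W_\alpha\|\le 1$ is genuinely consumed there (this is also why the paper's Theorem \ref{thm25} must rescale before invoking the moment-level statement). The one point to make explicit if this were written out in full is that the Agler--Embry equivalence of subnormality with complete monotonicity of the moment sequence is being applied separately to each contraction $W_\alpha^{(p)}$, which is legitimate precisely because $\alpha_j\le 1$ forces $\alpha_j^p\le 1$ for every $p>0$; with that said, the proof stands, and it yields Theorem \ref{thm24} as a byproduct rather than a corollary.
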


We seek a relationship between log completely alternating weights and log completely monotone moments for a shift, but first need a lemma.

\begin{lem}
Let $n, k$ be positive integers with $0\leq k < n$. \  Then
\begin{equation}
\sum_{i=0}^k (-1)^i \binom{n}{i} = (-1)^k \binom{n-1}{k}.
\end{equation}
\end{lem}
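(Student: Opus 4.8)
The plan is to prove this by a direct telescoping argument built on Pascal's rule, rather than by induction (though an induction on $k$ using $\binom{n}{k}-\binom{n-1}{k-1}=\binom{n-1}{k}$ would work equally well). The key observation is that the alternating partial sum of a row of Pascal's triangle collapses to a single term once each $\binom{n}{i}$ is split along the recurrence $\binom{n}{i}=\binom{n-1}{i}+\binom{n-1}{i-1}$, the same binomial identity already invoked to establish \eqref{eq1}.

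Concretely, I would first substitute Pascal's identity into the left-hand side to obtain
$$
\sum_{i=0}^k (-1)^i \binom{n}{i} = \sum_{i=0}^k (-1)^i \binom{n-1}{i} + \sum_{i=0}^k (-1)^i \binom{n-1}{i-1}.
$$
Next I would reindex the second sum by setting $j=i-1$. The term $i=0$ contributes $\binom{n-1}{-1}=0$ and so may be discarded, which yields
$$
\sum_{i=0}^k (-1)^i \binom{n-1}{i-1} = -\sum_{j=0}^{k-1} (-1)^j \binom{n-1}{j}.
$$

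Substituting back, the two remaining sums overlap over the range $0\le i\le k-1$ and cancel term by term, leaving only the $i=k$ contribution of the first sum, namely $(-1)^k\binom{n-1}{k}$. The hypothesis $0\le k<n$ guarantees $k\le n-1$, so this surviving coefficient is well defined (and in fact nonzero), which completes the argument. There is no genuine obstacle here; the single point requiring a moment's care is the reindexing of the shifted sum, specifically verifying that the out-of-range term $\binom{n-1}{-1}$ vanishes so that the shifted and unshifted sums align and telescope cleanly.
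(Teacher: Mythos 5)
Your proposal is correct and is essentially the paper's argument: the paper proves the identity by finite induction on $k$ using Pascal's identity, and your telescoping computation is just that induction unrolled, with the same single ingredient $\binom{n}{i}=\binom{n-1}{i}+\binom{n-1}{i-1}$ doing all the work. The reindexing step and the observation that $\binom{n-1}{-1}=0$ are handled correctly, so nothing is missing.
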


\begin{proof}
For a fixed $n$, do finite induction on $k$, and use Pascal's identity.  
\end{proof}

\medskip

\begin{prop}   \label{prop:wtslogCAvsmomlogCM}
The weights squared $(w_n^2)_{n=0}^\infty$ (equivalently, the weights) of a contractive weighted shift are log completely alternating if and only if the moments of the shift are log completely monotone.
\end{prop}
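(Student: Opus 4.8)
The plan is to exploit the identity \eqref{eq1} established earlier, namely $LT_{\gamma}(n+1,k) = -2\,LT_{\alpha}(n,k)$, which directly ties the log-differences of the moments to those of the weights. The statement ``weights (squared) log completely alternating'' means $LT_{\alpha}(n,k) \le 0$ for all $n \ge 1, k \ge 0$, while ``moments log completely monotone'' means $LT_{\gamma}(n,k) \ge 0$ for all $n \ge 1, k \ge 0$. So for indices $n \ge 2$, the identity gives the equivalence immediately and term by term: $LT_{\gamma}(n+1,k) \ge 0 \iff LT_{\alpha}(n,k) \le 0$. The only subtlety is bookkeeping at the low end of the index range, so I would organize the proof around carefully matching which $n$'s on each side correspond.

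First I would recall \eqref{eq1} and read off that, letting $n$ range over $n \ge 1$, the family $\{ LT_{\gamma}(n+1,k) : n \ge 1,\ k \ge 0\}$ is nonnegative precisely when the family $\{ LT_{\alpha}(n,k) : n \ge 1,\ k\ge 0\}$ is nonpositive. This handles complete monotonicity of $\gamma$ at all orders $m = n+1 \ge 2$. The remaining piece is the first-order condition for $\gamma$, i.e. $LT_{\gamma}(1,k) = \ln\gamma_k - \ln\gamma_{k+1} \ge 0$. Since $\gamma_{k+1} = \gamma_k \,\alpha_k^2 \ge \gamma_k$ would fail monotonicity, I need $\gamma_{k+1} \le \gamma_k$; but for a contractive shift $\alpha_k \le 1$, so $\gamma$ is nonincreasing and $LT_{\gamma}(1,k) \ge 0$ holds automatically. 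This is where the contractivity hypothesis earns its keep, and I would flag it explicitly.

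In the reverse direction, starting from $\gamma$ log completely monotone, the orders $m \ge 2$ of the monotonicity inequalities translate through \eqref{eq1} into exactly the log complete alternation of $\alpha$ (all orders $n \ge 1$), with no leftover condition to check on the weights side, since the alternating definition starts at $n=1$. Thus the correspondence is clean: order $m \ge 2$ on the moment side matches order $m-1 \ge 1$ on the weight side, and the order-$1$ moment condition is free under contractivity. I would then note the parenthetical ``equivalently, the weights'' follows because $LT_{\alpha^2}(n,k) = 2\,LT_{\alpha}(n,k)$, so multiplying by the positive constant $2$ preserves all the sign conditions.

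I do not expect a serious obstacle here; the main result is essentially a direct consequence of the log-difference identity \eqref{eq1} combined with contractivity. The one place to be careful is the index shift between the two sides together with the separate treatment of the first-order moment inequality, so the bulk of the write-up is verifying that the lone order-$1$ condition on $\gamma$ is automatic and does not require any alternation property of $\alpha$. The lemma proved just above (the partial alternating-sum identity for binomial coefficients) does not appear to be needed for this particular proposition and is presumably staged for a later argument, so I would not invoke it.
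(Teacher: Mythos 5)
Your proposal is correct and follows essentially the same route as the paper's own proof: both rest on the identity \eqref{eq1} relating $LT_{\gamma}(n+1,k)$ to $-LT_{\alpha^2}(n,k)$, with contractivity supplying the leftover first-order condition $LT_{\gamma}(1,k)\ge 0$. Your explicit bookkeeping of the index shift, and your observation that the preceding binomial lemma is not actually invoked here, are both accurate.
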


\begin{proof}
Suppose the weights squared are log completely alternating. \ It is a computation to check from the definitions that $(\nabla \ln \gamma)_n = - (\ln w^2)_n$ for all $n \geq 0$. \ Since each $w_n \leq 1$, we have that $\gamma$ is $1$--log monotone. \ It is well known that for any sequence $a$ and any $k$, 
$$
(\nabla^{k+1} a)_n = (\nabla^k a)_n - (\nabla^k a)_{n+1} \; (\textrm{for all } n \ge 0),
$$
and this together with an induction based on $(\nabla \ln \gamma)_n = - (\ln w^2)_n$ yields (as in \cite[Remark 2.6]{BCE}) that 
$$
(\nabla^{k+1} \ln \gamma)_n=-(\nabla^k (\ln w^2))_n
$$
for all $k,n \geq 0$. \ Since $(w_n^2)$ is log completely alternating, it results that $\gamma$ is log completely monotone. \ As the steps above are reversible, and using again that $w_n \leq 1$ for all $n$, the reverse implication follows as well. 
\end{proof}

\medskip

The following characterization is then immediate.

\begin{thm} \label{thm24}
A contractive weighted shift $W_\alpha$ is $\mathcal{MID}$ if and only if the sequence of moments is log completely monotone.
\end{thm}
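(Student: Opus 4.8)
The plan is to obtain this statement as an immediate consequence of the two results just established, with the condition ``the weights squared are log completely alternating'' serving as the pivot that links them. Concretely, Theorem \ref{th:BCEmomlogID} asserts that a contractive weighted shift $W_\alpha$ lies in $\mathcal{MID}$ if and only if its sequence of weights squared $(\alpha_n^2)$ is log completely alternating, while Proposition \ref{prop:wtslogCAvsmomlogCM} asserts that $(\alpha_n^2)$ is log completely alternating if and only if the moment sequence $\gamma$ is log completely monotone. Chaining these two equivalences through their common middle term yields exactly the desired biconditional between membership in $\mathcal{MID}$ and log complete monotonicity of the moments.

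First I would record the forward direction: if $W_\alpha \in \mathcal{MID}$, then Theorem \ref{th:BCEmomlogID} gives that $(\alpha_n^2)$ is log completely alternating, and Proposition \ref{prop:wtslogCAvsmomlogCM} then transfers this to log complete monotonicity of $\gamma$. For the converse I would simply reverse each implication, which is legitimate since both cited results are genuine equivalences; in particular, Proposition \ref{prop:wtslogCAvsmomlogCM} explicitly invokes the reversibility of its difference-operator computation together with the contractivity hypothesis $\alpha_n \le 1$. That contractivity assumption is carried throughout and is precisely what both prior results require, so no additional hypothesis enters here.

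Since every piece of substantive content has already been absorbed into Proposition \ref{prop:wtslogCAvsmomlogCM} --- in particular the identity $(\nabla^{k+1} \ln \gamma)_n = -(\nabla^k (\ln \alpha^2))_n$ and the resulting flip of the ``alternating'' sign convention for the weights into the ``monotone'' sign convention for the moments --- there is no genuine obstacle remaining at this stage; the result is, as advertised, immediate. The only point that warrants a moment's attention is the bookkeeping of signs and indices, namely confirming that the complete alternation inequalities $\le 0$ for $(\ln \alpha_n^2)$ correspond, under the shift by one in the order of the difference operator, to the complete monotonicity inequalities $\ge 0$ for $(\ln \gamma_n)$. That correspondence is exactly what the pivot proposition guarantees, so the theorem follows at once.
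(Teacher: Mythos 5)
Your proof is correct and follows exactly the route the paper intends: the theorem is presented there as an immediate consequence of chaining Theorem \ref{th:BCEmomlogID} with Proposition \ref{prop:wtslogCAvsmomlogCM}, which is precisely what you do. Your additional remarks on reversibility and the contractivity hypothesis are accurate and consistent with the pivot proposition's own proof.
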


We digress for a moment to point out a fundamental difference between conditions on moments and conditions on weights. \  In general, conditions on weights are robust under scaling, and in particular scaling is harmless for subnormality or moment infinite divisibility results, as indicated by the following in comparison with/improvement of Theorem \ref{th:BCEmomlogID}.

\begin{thm} \label{thm25}
Suppose $W_\alpha$ is a weighted shift with weight sequence $\alpha$. \  If the weight sequence $\alpha$ is log completely alternating (equivalently, the squared sequence $(\alpha^2_n)_{n=0}^\infty$ is log completely alternating), then $W_\alpha$ is $\mathcal{MID}$ (and therefore subnormal).
\end{thm}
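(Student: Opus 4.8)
The plan is to reduce to the contractive case already handled in Theorem \ref{th:BCEmomlogID} by exploiting the fact that scaling a weight sequence by a positive constant leaves both hypothesis and conclusion intact. First I would record the harmless equivalence in the parenthetical: since $\ln \alpha_n^2 = 2 \ln \alpha_n$ and $\nabla^m$ is linear, $(\nabla^m \ln \alpha^2)_k = 2 (\nabla^m \ln \alpha)_k$ for all $m \ge 1$ and $k \ge 0$, so the two sequences have forward differences of the same sign, and one is log completely alternating exactly when the other is. Thus it suffices to argue from the hypothesis that $\alpha$ itself is log completely alternating.

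Next I would scale to achieve contractivity. Since the weight sequence is bounded, put $c := 1/\|W_\alpha\| = 1/\sup_n \alpha_n$ (or any smaller positive number), so that the scaled shift $W_{c\alpha}$, with weights $c\alpha_n \le 1$, is contractive. The key observation is that log complete alternation is scale invariant: because $\ln(c\alpha_n) = \ln c + \ln \alpha_n$ and $\nabla^m$ annihilates constants for every $m \ge 1$, we have $(\nabla^m \ln(c\alpha))_k = (\nabla^m \ln \alpha)_k$ for all $m \ge 1$ and $k \ge 0$. Hence $c\alpha$ is again log completely alternating, and Theorem \ref{th:BCEmomlogID} applies to give $W_{c\alpha} \in \mathcal{MID}$.

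It then remains to transfer $\mathcal{MID}$ back from $W_{c\alpha}$ to $W_\alpha$, which is the one point requiring a small argument. For any $p > 0$ the $p$-th power shift of $W_{c\alpha}$ has weights $(c\alpha_n)^p = c^p\, \alpha_n^p$, so $W_{c\alpha}^{(p)}$ is the constant $c^p$-scaling of $W_\alpha^{(p)}$. Since multiplying all weights of a subnormal shift by a positive constant $\lambda$ merely dilates its Berger measure (replacing $\mu$ by its pushforward under $t \mapsto \lambda^2 t$, as one sees from $\gamma_n^{(\lambda\alpha)} = \lambda^{2n}\gamma_n^{(\alpha)}$), subnormality is preserved under scaling in both directions. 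Therefore $W_\alpha^{(p)}$ is subnormal for every $p > 0$, i.e.\ $W_\alpha \in \mathcal{MID}$; taking $p = 1$ yields subnormality of $W_\alpha$ itself. I expect the only real obstacle to be stating the scaling invariance of $\mathcal{MID}$ cleanly---that is, verifying that scaling commutes with the $p$-th power operation and preserves subnormality---after which the reduction to Theorem \ref{th:BCEmomlogID} is immediate; everything else is bookkeeping with the linearity of $\nabla^m$ and the fact that $\nabla^m$ kills constants for $m \ge 1$.
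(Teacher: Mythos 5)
Your proof is correct and follows essentially the same route as the paper: reduce to the contractive case of Theorem \ref{th:BCEmomlogID} by scaling the weights by $1/\|W_\alpha\|$, note that log complete alternation is unaffected by the scaling, and then transfer subnormality of each Schur power back via $W_\alpha^{(p)} = \|W_\alpha\|^p W_{\beta}^{(p)}$. The only difference is that you spell out the invariance under scaling (via $\nabla^m$ killing constants) and the Berger-measure dilation more explicitly than the paper does, which is harmless.
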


\begin{proof}
Supposing first that $W_\alpha$ is a contraction, this follows immediately from Theorem \ref{th:BCEmomlogID} and Proposition \ref{prop:wtslogCAvsmomlogCM}. \  For general $W_\alpha$, consider the shift $W_{\beta}$ with weight sequence $(\beta_n)$ satisfying $\beta_n = \frac{\alpha_n}{\|W_\alpha\|}$. \  The weights $\beta_n$ are clearly log completely alternating, so $W_{\beta}$ is $\mathcal{MID}$. \  Surely if $W_{\beta}^{(p)}$ is subnormal for some $p > 0$ it is immediate from the definitions that $W_{\alpha}^{(p)} = \|W_\alpha\|^{p} W_{\beta}^{(p)}$ is subnormal, and moment infinite divisibility follows for $W_\alpha$. \ 
\end{proof}

This clearly differs for moment conditions, since the moments of a scaled shift are not the straightforward scaling of the original moments. \  Alternatively, one may note that the Agler conditions for subnormality (which conditions are of moment type) do assume that the operator is a contraction.

Recall that from \cite[Prop 2.4]{BCE} that for positive term sequences completely alternating implies log completely alternating. \  The containment reverses when we move to completely monotone and log completely monotone (contractive) sequences.

\begin{cor}
We have for contractive (positive term) sequences that log completely monotone implies completely monotone.
\end{cor}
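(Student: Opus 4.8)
The plan is to obtain this from the Schoenberg-type correspondence already quoted above, namely \cite[Prop.~6.10]{BCR}: a sequence $\psi$ is completely alternating if and only if $e^{-t\psi}$ is completely monotone for every $t > 0$. The bridge is the observation that the hypothesis ``$a$ is log completely monotone'' is, after negation, exactly the statement ``$-\ln a$ is completely alternating,'' and that the conclusion we want is precisely the $t = 1$ instance of the completely monotone family produced by that correspondence.

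Concretely, I would set $\psi_n := -\ln a_n$; positivity of the terms makes this well defined, and contractivity ($a_n \le 1$) gives $\psi_n \ge 0$. Since $\nabla$ is linear we have $(\nabla^m \psi)_k = -(\nabla^m \ln a)_k$ for all $m, k$, so the hypothesis $(\nabla^m \ln a)_k \ge 0$ (for all $m \ge 1$, $k \ge 0$) is equivalent to $(\nabla^m \psi)_k \le 0$ for all $m \ge 1$ and $k \ge 0$; that is, $\psi$ is completely alternating. Applying \cite[Prop.~6.10]{BCR} then yields that $e^{-t\psi}$ is completely monotone for every $t > 0$, and specializing to $t = 1$ gives that $(e^{-\psi_n})_n = (a_n)_n$ is completely monotone, which is the desired conclusion.

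I do not anticipate a serious obstacle here: once the correspondence of \cite[Prop.~6.10]{BCR} is in hand, the argument is essentially the sign bookkeeping in passing from $\ln a$ to $\psi = -\ln a$, together with the remark that $e^{-\psi_n} > 0$ automatically supplies whatever nonnegativity of terms one might wish for. The only point worth flagging in the write-up is the asymmetry with the implication recalled just before the statement: there, completely alternating $\Rightarrow$ log completely alternating holds for all positive-term sequences, whereas here we invoke contractivity to guarantee $\psi_n = -\ln a_n \ge 0$, keeping us in the usual nonnegative completely alternating setting and matching the contractive context in which moment infinite divisibility is studied.
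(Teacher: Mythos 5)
Your argument is correct, but it is not the route the paper takes. The paper's proof is operator-theoretic: it regards the contractive log completely monotone sequence as the moment sequence of a weighted shift, invokes Theorem \ref{thm24} to conclude the shift is $\mathcal{MID}$ and hence subnormal, and then reads off complete monotonicity of the moments from the existence of a Berger measure. You instead work entirely at the level of sequences, applying the Schoenberg-type correspondence \cite[Prop.~6.10]{BCR} to $\psi = -\ln a$ and specializing to $t=1$. Your sign bookkeeping is right, and the paper itself quotes that proposition for general real sequences, so the step is legitimate. What your approach buys: it is more elementary and self-contained (no detour through subnormality or Berger measures), it sidesteps the normalization $a_0=1$ implicit in treating $a$ as a moment sequence, and --- as you half-observe --- it does not actually need contractivity at all, since \cite[Prop.~6.10]{BCR} applies to arbitrary real-valued $\psi$; contractivity only serves to keep $\psi \ge 0$, which the correspondence does not require. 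What the paper's route buys is economy within its own framework: having already built the bridge between log completely alternating weights and log completely monotone moments, the corollary falls out in one line, and it situates the implication inside the diagram of shift-theoretic conditions that organizes the section. Both proofs are valid; yours even yields the slightly stronger statement that for positive-term sequences, log completely monotone implies $a^t$ is completely monotone for every $t>0$.
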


\begin{proof}
By the above, log completely monotone for a contractive sequence yields that the shift is $\mathcal{MID}$ and in particular subnormal, which means that its moment sequence is completely monotone.
\end{proof}

Note that implicit in the above is that a ``strong condition'' at the level of weights (log completely alternating) is equivalent to a ``strong condition'' at the level of moments (log completely monotone). \  The situation is well-captured by the following diagram:
\vspace{.2in}


$\begin{array}{ccc} 
\framebox[2.41in][c]{
$\begin{array}{c}
\textrm{Weights squared} \\
\textrm{completely alternating} \\
(\textrm{shift is } \mathcal{MID} \; + \; \textrm{add. property})
\end{array}$
} && \\
\mbox{\Huge{$\Downarrow$}} \hspace*{.2in}  \mbox{\Huge{$\not\Uparrow$}} && \\

\framebox[2.2in][l]{
$\begin{array}{c}
\textrm{Weights squared} \\
\textrm{log completely alternating} \\
(\textrm{shift is } \mathcal{MID})
\end{array}$ 
}  & \mbox{\Huge{$\Longleftrightarrow$}} & 
\framebox[2.1in][l]{
$\begin{array}{c}
\textrm{Moments} \\
\textrm{log completely monotone} \\
(\textrm{shift is } \mathcal{MID})
\end{array}$
} \\
&&  \mbox{\Huge{$\Downarrow$}} \hspace*{.2in}  \mbox{\Huge{$\not\Uparrow$}}   \label{diagram}  \\
&& \framebox[1.9in][l]{
$\begin{array}{c}
\textrm{Moments} \\
\textrm{completely monotone} \\
(\textrm{shift is subnormal}) 
\end{array}$
}	
\end{array}$
\bigskip
\bigskip

We recall that a completely alternating sequence is log completely alternating (and the reverse need not hold), and so weights completely alternating is a sufficient condition for moment infinite divisibility. \ On the other hand, it is not known what additional property the stronger condition gives for a weighted shift.

\begin{remark}
{\rm We pause for a moment to consider some alternative potential definitions of $n$--log alternating for sequences. \  Observe that $1$--log alternating for a sequence $(a_n)$ is $\log a_n - \log a_{n+1} \leq 0$, which is simply $a_n \leq a_{n+1}$ and which is familiar for weights as non-decreasing (i.e., the shift is hyponormal). \  And $2$--log alternating is $\log a_n - 2 \log a_{n+1} + \log a_{n+2} \leq 0$, which translates to
$$\log\left(\frac{a_n a_{n+2}}{a_{n+1}^2}\right) \leq 0,$$
that is,
$$\frac{a_n a_{n+2}}{a_{n+1}^2} \leq 1,$$
or
$$a_n a_{n+2} \leq a_{n+1}^2.$$
This property has been studied by many authors under the name ``log-concave.''  It has the companion condition ``log-convex'' which is, as expected,
$$a_n a_{n+2} \geq a_{n+1}^2.$$
The papers \cite{StR} and \cite{WZ} contain some references to these studies.

Other authors have considered generalizations of this to other $n$ (again, see \cite{WZ})  but they are not the generalizations provided by the $n$-log alternating or $n$--log monotone ones we consider. \  These authors generalize $2$--log-concave as follows:  define a map $\mathcal{L}$ on sequences by $\mathcal{L}(a_k) = b_k$ where $b_0 = a_0^2$ and $b_k = a_k^2 - a_{k+1}a_{k-1}$ for $k \geq 1$. \  It is obvious that a sequence $(a_k)$ is log-concave $=$ $2$--log-concave if the sequence $\mathcal{L}(a_k)$ is non-negative. \ They then define $n$--log-concave by iteration:  the sequence $(a_k)$ is said to be $n$--log-concave if the sequence $\mathcal{L}^{n-1}(a_k)$ is non-negative. \  There is a similarly defined companion notion of $n$--log-convex. \ It is simple to check that these are not the same as our version (in either the concave or convex case) if $n \geq 3$.  

One result is that every Stieltjes moment sequence is infinitely log-convex, meaning, of course, that it is $n$--log-convex for every $n$, in \cite[Theorem 2.3]{WZ}. \ (Recall that a sequence of positive numbers is a Stieltjes moment sequence if it consists of the power moments of a positive Borel measure on $[0,+\infty)$.) \ There is no example given of an infinite sequence which is infinitely log-concave. \  Their version of infinitely log-convex is surely different from that used here (completely log monotone)  since it includes all Stieltjes moment sequences (and an even larger class). \  So it is perhaps surprising that apparently it is ``hard'' to be infinitely log-concave.

Observe that the weights squared of the Bergman shift (a shift with extremely good properties) are not even $3$--log-concave according to their generalization.} 
\end{remark}


\section{Consequences for the Class $\mathcal{MID}$}  \label{Sect3}

We seek to show in this section that moment infinite divisibility of a weighted shift is robust under a variety of operations (in particular the Aluthge transform) and is rigid in certain senses.


\subsection{A quotient operation and the Aluthge Transform} \label{quotient}

A starting point is a very useful and somewhat surprising result about log completely alternating sequences.

\begin{prop}  \label{prop:spacedseqquotient}
Suppose $a = (a_n)$ is a log completely alternating sequence with positive terms, and let $N \in \mathbb{N}$. \  Then the sequence $b$ defined by $b_n := \frac{a_n}{a_{n+N}}$ is log completely alternating.
\end{prop}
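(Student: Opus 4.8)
The plan is to work entirely at the level of logarithms and reduce everything to the definition of complete alternation for the sequence $\psi := (\ln a_n)$, which by hypothesis is completely alternating; that is, $(\nabla^m \psi)_k \le 0$ for all $m \ge 1$ and $k \ge 0$. Since $\ln b_n = \ln a_n - \ln a_{n+N} = \psi_n - \psi_{n+N}$, the claim that $b$ is log completely alternating is precisely the assertion that $(\nabla^m (\psi_\cdot - \psi_{\cdot + N}))_k \le 0$ for all $m \ge 1$ and $k \ge 0$, where $\psi_{\cdot + N}$ denotes the shifted sequence $n \mapsto \psi_{n+N}$.

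First I would observe that the forward difference operator $\nabla$ commutes with the shift $n \mapsto n+N$, so that applying $\nabla^m$ to the sequence $\psi_{\cdot + N}$ yields $(\nabla^m \psi)_{k+N}$. Consequently
$$
(\nabla^m (\psi_\cdot - \psi_{\cdot + N}))_k = (\nabla^m \psi)_k - (\nabla^m \psi)_{k+N}.
$$
The key step is then to telescope this spacing-$N$ difference of the sequence $(\nabla^m \psi)$ into a sum of single forward differences, using the identity $(\nabla^{m+1} a)_n = (\nabla^m a)_n - (\nabla^m a)_{n+1}$ recalled earlier:
$$
(\nabla^m \psi)_k - (\nabla^m \psi)_{k+N} = \sum_{j=0}^{N-1} \big[ (\nabla^m \psi)_{k+j} - (\nabla^m \psi)_{k+j+1} \big] = \sum_{j=0}^{N-1} (\nabla^{m+1} \psi)_{k+j}.
$$

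To finish, I would invoke complete alternation of $\psi$ once more: every term $(\nabla^{m+1} \psi)_{k+j}$ on the right is $\le 0$ because $m+1 \ge 2 \ge 1$, so the entire sum is $\le 0$. As this holds for all $m \ge 1$ and $k \ge 0$, it is exactly the statement that $\ln b$ is completely alternating, i.e.\ that $b$ is log completely alternating. The only points requiring care --- and the nearest thing to an obstacle --- are the bookkeeping in the telescoping identity and the verification that $\nabla$ commutes with the shift; both are routine, so the essential content is simply the recognition that a spacing-$N$ difference of the $m$-th differences is a nonpositive sum of $(m+1)$-st differences. Positivity of the terms $a_n$ is needed only so that each $\ln a_n$ is well defined, and the value of $N$ plays no role beyond indexing the finite telescoping sum.
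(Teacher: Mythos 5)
Your proof is correct and rests on exactly the same identity as the paper's argument, namely that the $m$-th difference of $\ln b$ equals $\sum_{j=0}^{N-1}(\nabla^{m+1}\ln a)_{k+j}$, a nonpositive sum of $(m+1)$-st differences of $\ln a$. The only difference is organizational: you obtain the identity directly from linearity of $\nabla^m$, its commutation with the shift $n\mapsto n+N$, and a telescoping sum, whereas the paper verifies the case $m=1$ by hand and then inducts on $m$; both are routine and the essential content is identical.
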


\begin{proof}[(Sketch of Proof)]
Choose $N$ as in the statement. \  Use $\nabla$ for the differences of the original sequence $ \log a$ and $\tilde{\nabla}$ for the differences of $ \log b$. \  It is a computation to show that for any $k$,
$$
(\tilde{\nabla}^{1})_k = (\nabla^{2})_k + (\nabla^{2})_{k+1} + \cdots + (\nabla^{2})_{k+N-1}.
$$
(An indication of what is needed is to note that, with $N = 3$,
$$\begin{array}{lllllll}
\log \frac{a_0}{a_3} - \log \frac{a_1}{a_4} &=& &\log a_0 &- \log a_1 &&- \log a_3 + \log a_4 \\
&=& &\log a_0 &- 2 \log a_1 &+ \log a_2 &+ \\
&& &&+ \log a_1 &- 2 \log a_2 &+ \log a_3 \hspace{.1in}+ \\
&&  &&&+ \log a_2 &- 2 \log a_3 + \log a_4,  \\
\end{array}
$$
which is precisely $(\tilde{\nabla}^{1})_0 = (\nabla^{2})_0 + (\nabla^{2})_1 + (\nabla^{2})_2$.)

But then
\begin{eqnarray*}
(\tilde{\nabla}^{2})_k &=& (\tilde{\nabla}^{1})_k - (\tilde{\nabla}^{1})_{k+1} \\
&=& (\nabla^{2})_k + (\nabla^{2})_{k+1} + \cdots + (\nabla^{2})_{k+N-1}   \\
&&\hspace*{.1in}- ((\nabla^{2})_{k+1} + (\nabla^{2})_{k+2} + \cdots + (\nabla^{2})_{k+N}) \\
&=& ((\nabla^{2})_k - (\nabla^{2})_{k+1}) + ((\nabla^{2})_{k+1}- (\nabla^{2})_{k+2}) + \cdots \\
&& \hspace*{.1in}+  ((\nabla^{2})_{k+N-1} - (\nabla^{2})_{k+N}) \\
&=& (\nabla^{3})_k + (\nabla^{3})_{k+1} + \cdots + (\nabla^{3})_{k+N-1}. \\
\end{eqnarray*}
Extending this result to higher order differences by induction, and using that $a = (a_n)$ is log completely alternating, we obtain that $b = (b_n)$ is log completely alternating.  
\end{proof}

Note that there is a version of this at the level of ``$n$'' as opposed to at the level of ``all'';  indeed, if $(a_n)$ is ($k+1$)--log alternating then $(b_n)$ is $k$--log alternating; observe that there is no assumption that the sequence is bounded. \  A corollary is immediate.

\begin{thm}  \label{cor:simplyweightquotient}
Let $a = (a_n)$ be a non-decreasing sequence of positive numbers. \  Then $a$ is log completely alternating if and only if the quotient sequence $b$ with $b_n = \frac{a_n}{a_{n+1}}$ is log completely alternating.
\end{thm}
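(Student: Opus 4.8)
The plan is to exploit the elementary identity $\ln b_n = \ln a_n - \ln a_{n+1} = (\nabla \ln a)_n$, which says that the log-sequence of $b$ is exactly the first forward difference of the log-sequence of $a$. This is the $N=1$ instance of the key computation underlying Proposition \ref{prop:spacedseqquotient}, but in this special case it is even more transparent: iterating $\nabla$ gives $(\nabla^m \ln b)_k = (\nabla^{m+1} \ln a)_k$ for all $m \ge 1$ and $k \ge 0$, so that the log-alternating conditions for $b$ are literally the log-alternating conditions for $a$ with the order raised by one. Once this re-indexing identity is recorded, both implications are bookkeeping.

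For the forward implication I would simply note that $a$ log completely alternating means $LT_a(n,k) = (\nabla^n \ln a)_k \le 0$ for all $n \ge 1$ and $k \ge 0$; restricting to $n \ge 2$ and applying the identity yields $(\nabla^m \ln b)_k = (\nabla^{m+1}\ln a)_k \le 0$ for all $m \ge 1$, i.e., $b$ is log completely alternating. (Equivalently, this is precisely Proposition \ref{prop:spacedseqquotient} specialized to $N=1$.) The non-decreasing hypothesis is not used in this direction.

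For the reverse implication, assume $b$ is log completely alternating, so $(\nabla^m \ln b)_k \le 0$ for all $m \ge 1$. By the identity this says exactly that $(\nabla^n \ln a)_k \le 0$ for all $n \ge 2$ and $k \ge 0$. The only condition missing for $a$ to be log completely alternating is the first-order one, $n = 1$, namely $(\nabla \ln a)_k = \ln a_k - \ln a_{k+1} \le 0$, which is equivalent to $a_k \le a_{k+1}$. This is precisely the standing assumption that $a$ is non-decreasing. Combining the two, $(\nabla^n \ln a)_k \le 0$ holds for every $n \ge 1$, so $a$ is log completely alternating.

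I expect no serious obstacle here, since the argument reduces to the index shift $m \mapsto m+1$; the one genuine point — and the only place the monotonicity assumption enters — is the observation that passing from $a$ to $b$ discards exactly the first difference, so that the $n=1$ (non-decreasing, equivalently hyponormality-type) condition is the sole extra ingredient needed to recover log complete alternation of $a$ from that of $b$. The main care will lie in stating the index shift correctly and in being explicit that, unlike in the $\mathcal{MID}$ characterizations above, neither boundedness nor contractivity of $a$ is required for this equivalence.
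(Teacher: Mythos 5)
Your proof is correct and follows essentially the same route as the paper: the identity $(\nabla^m \ln b)_k = (\nabla^{m+1}\ln a)_k$ is exactly the paper's observation that the $n$--log alternating test for $b$ is the $(n+1)$--log alternating test for $a$ (equivalently, Proposition \ref{prop:spacedseqquotient} with $N=1$), and the non-decreasing hypothesis supplies the missing first-order condition in the reverse direction. No gaps.
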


\begin{proof}
For the only if, it is easy to see that the test for $n$--log alternating for $b$ is the test for ($n+1$)--log alternating for 
$a$, and we get $1$--log alternating for $a$ since the $a_n$ are non-decreasing. \ The forward direction follows from Proposition \ref{prop:spacedseqquotient}.  
\end{proof}

\begin{cor} \label{cor42}
Let $W_\alpha$ be a contractive hyponormal weighted shift. \ Then $W_\alpha$ is $\mathcal{MID}$ if and only if $W_\beta$ is $\mathcal{MID}$, where $\beta$ is the quotient sequence $\beta_n = \frac{\alpha_n}{\alpha_{n+1}}$.
\end{cor}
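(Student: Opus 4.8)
The plan is to prove this as a short chain of equivalences, combining the characterization in Theorem \ref{th:BCEmomlogID} with the quotient result of Theorem \ref{cor:simplyweightquotient}. The key point is that hyponormality does two jobs at once: it guarantees the monotonicity hypothesis needed to invoke the quotient theorem, and it forces the quotient shift $W_\beta$ to be a contraction, which is exactly what is needed to apply the weight-level characterization of $\mathcal{MID}$ to $W_\beta$ as well.

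In detail, I would first record that, as noted in the introduction, hyponormality of a weighted shift is precisely the condition $\alpha_n \le \alpha_{n+1}$ for all $n$, so the positive weight sequence $\alpha$ is non-decreasing. Since $\alpha$ is non-decreasing and positive, the quotients satisfy $\beta_n = \frac{\alpha_n}{\alpha_{n+1}} \le 1$, whence $\|W_\beta\| = \sup_n \beta_n \le 1$ and $W_\beta$ is a contraction. Now I would run the equivalences: because $W_\alpha$ is contractive, Theorem \ref{th:BCEmomlogID} gives
$$
W_\alpha \in \mathcal{MID} \iff \alpha \textrm{ is log completely alternating}.
$$
Because $\alpha$ is non-decreasing, Theorem \ref{cor:simplyweightquotient} applies and yields
$$
\alpha \textrm{ is log completely alternating} \iff \beta \textrm{ is log completely alternating}.
$$
Finally, because $W_\beta$ is contractive (as just checked), a second application of Theorem \ref{th:BCEmomlogID} gives
$$
\beta \textrm{ is log completely alternating} \iff W_\beta \in \mathcal{MID}.
$$
Concatenating the three equivalences produces exactly the claimed statement, $W_\alpha \in \mathcal{MID} \iff W_\beta \in \mathcal{MID}$.

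There is no substantive analytic obstacle here; the corollary is essentially a packaging of Theorem \ref{th:BCEmomlogID} and Theorem \ref{cor:simplyweightquotient}. The only genuine care required is to verify that the hyponormality hypothesis legitimately unlocks both cited results, and in particular that $W_\beta$ is a contraction so that Theorem \ref{th:BCEmomlogID} may be invoked in the $\beta$ direction. Since $\alpha_n \le \alpha_{n+1}$ gives $\beta_n \le 1$ immediately, this verification is routine, and I would present it as a one-line observation rather than belabor it.
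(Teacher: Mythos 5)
Your proof is correct and follows exactly the route the paper intends: the corollary is stated there without proof as an immediate consequence of Theorem \ref{th:BCEmomlogID} and Theorem \ref{cor:simplyweightquotient}, and your only added step --- checking that hyponormality makes $W_\beta$ a contraction so the weight-level characterization applies to it --- is precisely the point the authors flag in the remark following the corollary. Nothing is missing.
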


Observe that some assumption on the sequence $a$ (respectively, the weight sequence $\alpha$) in Theorem \ref{cor:simplyweightquotient} (resp. Corollary \ref{cor42}) is required, because if $b$ is the moment sequence of the Dirichlet shift (the canonical completely hyperexpansive operator and $2$-isometry which is not subnormal), a sequence $a$ whose quotient is $b$ will be decreasing. \ Indeed, record for future use that if $b$ is the quotient sequence of $a$, then
\begin{equation}   \label{eq:relwtsquoseq}
a_n = \frac{a_0}{\prod_{i=0}^{n-1} b_i}.
\end{equation}
We record also that if $\alpha$ is a weight sequence and $\beta$ its quotient (weight) sequence, then the moment sequences satisfy
$$
\gamma^{(\beta)}_n = \gamma^{(\alpha)}_1 \frac{\gamma^{(\alpha)}_n}{\gamma^{(\alpha)}_{n+1}}.
$$

In light of these, clearly one could replace the assumption above that the sequence $a$ is increasing with the assumption that the sequence $b$ is bounded above by $1$, or, in the corollary, that $W_\beta$ is a contraction.

We pause briefly to to exhibit an example of an $\mathcal{MID}$ shift arising from the quotient operation of Corollary 
\ref{cor42}. 
   
\medskip
\begin{ex} \label{ex2}
The weighted shift with weights 
$$
\alpha_n:=\left(\frac{(n+1)(n+3)}{(n+2)^2}\right)^{1/2}
$$
is $\mathcal{MID}$ and hence, in particular, subnormal. \ In fact, this is the quotient operation of Corollary \ref{cor:simplyweightquotient} applied to the (moment infinitely divisible) Bergman shift $B$. 
\end{ex}
\medskip

\noindent We remark that for the weighted shift in Example \ref{ex2} we know both the Berger measure for the moments ($\mu(t) = \frac{1}{2} \chi_{[0,1]}(t) dt + \frac{1}{2} \delta_1(t) dt$) and a Levy-Khinchin measure (see \cite[Chapter 4, Prop. 6.12]{BCR}) for the weights squared ($- \ln t \chi_{[0,1]}(t) dt$) where $\delta_1$ denotes the usual Dirac point mass at $1$. \ (These may be checked by straightforward computations once the candidate is guessed;  note that in this case there is a Levy-Khinchin measure for the weights squared because they are not only log completely alternating but actually completely alternating.) \ And note also that we may do the same thing, or do the quotient operation multiple times, for any of the Agler shifts.

With Proposition \ref{prop:spacedseqquotient} in hand, we may give an example of the ``robustness'' of moment infinite divisibility of weighted shifts. \ Recall that if $W_\alpha$ and $W_\beta$ are subnormal (respectively, $\mathcal{MID}$) weighted shifts then the ``Schur product'' shift with weights $(\alpha_n \beta_n)$ is subnormal (respectively, $\mathcal{MID}$); this follows from a $k$-hyponormality approach and the standard positivity fact about Schur products of matrices.

\begin{thm}     \label{th:WvsATW}
Suppose $W_\alpha$ is a contractive weighted shift whose weights approach a limit (in particular, if $W_\alpha$ is hyponormal). \ Then the Aluthge transform $AT(W_\alpha)$ is $\mathcal{MID}$ if and only if $W_\alpha$ is.
\end{thm}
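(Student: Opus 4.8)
The plan is to strip the statement down to a single increment sequence on which the Aluthge transform acts by a simple averaging. First I would record that the Aluthge transform of a contraction is a contraction and that the weights $\sqrt{\alpha_n\alpha_{n+1}}$ of $AT(W_\alpha)$ are at most $1$, so both shifts are contractive and Theorem~\ref{th:BCEmomlogID} applies to each. The claim then reads: $\alpha$ is log completely alternating if and only if $\beta$ is, where $\beta_n=\sqrt{\alpha_n\alpha_{n+1}}$. Taking logarithms gives $\ln\beta_n=\tfrac12(\ln\alpha_n+\ln\alpha_{n+1})$. I would next invoke the standard equivalence (see \cite{BCR}) that a sequence is log completely alternating exactly when its first log-increment sequence is completely monotone. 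Setting $g_n:=\ln\alpha_{n+1}-\ln\alpha_n$, this says $\alpha$ is log completely alternating iff $(g_n)$ is completely monotone; and since $\ln\beta_{n+1}-\ln\beta_n=\tfrac12(g_n+g_{n+1})$, that $\beta$ is log completely alternating iff $h_n:=g_n+g_{n+1}$ is completely monotone (the constant $\tfrac12$ being immaterial). So the whole theorem reduces to the sequential assertion that $(g_n)$ is completely monotone if and only if $(g_n+g_{n+1})$ is, with the extra datum, supplied by the hypothesis, that $g_n\to 0$.

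For the forward implication no limit is needed: if $(g_n)$ is completely monotone then so is its shift $(g_{n+1})$, and complete monotonicity is preserved by sums, hence $(h_n)$ is completely monotone and $AT(W_\alpha)\in\mathcal{MID}$. The content of the theorem is therefore the reverse implication, namely inverting the averaging map $g\mapsto g+(\text{shift of }g)$; this is where the limit hypothesis must enter, and I expect it to be the main obstacle, since the homogeneous solutions $(-1)^n$ of $x_n+x_{n+1}=0$ show that $(h_n)$ alone does not determine $(g_n)$.

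For the reverse implication I would argue as follows. Assuming $AT(W_\alpha)\in\mathcal{MID}$, its weights $\beta_n$ are nondecreasing, so $\beta_n\to\lim\beta_n\ge\beta_0>0$; combined with $\alpha_n\to L$ and $\beta_n^2=\alpha_n\alpha_{n+1}\to L^2$ this forces $L>0$, whence $\ln\alpha_n\to\ln L$ and $g_n\to 0$. By Hausdorff's theorem, complete monotonicity of $(h_n)$ gives $h_n=\int_0^1 t^n\,d\nu(t)$ for a positive measure $\nu$ on $[0,1]$. Since $h_n=g_n+g_{n+1}\to 0$ while $h_n\to\nu(\{1\})$, the atom at $1$ vanishes, $\nu(\{1\})=0$. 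Then $\tilde g_n:=\int_0^1 t^n\,\frac{d\nu(t)}{1+t}$ is completely monotone (the measure $\frac{1}{1+t}\,d\nu$ being positive) and satisfies $\tilde g_n+\tilde g_{n+1}=\int_0^1 t^n\,d\nu(t)=h_n$. Both $(g_n)$ and $(\tilde g_n)$ solve $x_{n+1}=h_n-x_n$, so their difference equals $(-1)^n c$ for a constant $c$; as $g_n\to 0$ and $\tilde g_n\to\tfrac12\nu(\{1\})=0$, the bounded alternating difference forces $c=0$, giving $g_n=\tilde g_n$. Hence $(g_n)$ is completely monotone, $\alpha$ is log completely alternating, and $W_\alpha\in\mathcal{MID}$. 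I would remark that the same bridge can be run on moments instead of weights via Theorem~\ref{thm24}, using $\ln\delta_n=\tfrac12(\ln\gamma_n+\ln\gamma_{n+1})-\text{const}$ for the moments $\delta$ of $AT(W_\alpha)$, which produces the identical averaging structure on $(\ln\gamma_n)$.
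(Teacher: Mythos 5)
Your proof is correct, and it takes a genuinely different route from the paper's. The paper handles the easy direction the same way you do (the log of $\sqrt{\alpha_n\alpha_{n+1}}$ is half the sum of a log completely alternating sequence and its shift), but for the hard direction it runs an iterative scheme: starting from the log completely alternating weights of $AT(W_\alpha)$, it alternately applies the spaced-quotient operation of Proposition \ref{prop:spacedseqquotient} and Schur products with tails to show that $\bigl(\sqrt{\alpha_n/\alpha_{n+M}}\bigr)_n$ is log completely alternating for arbitrarily large $M$, and then lets $M\to\infty$ in each fixed test $LT(n,k)$, using the limit hypothesis to make the tail contribution vanish. You instead pass to the increment sequence $g_n=\ln\alpha_{n+1}-\ln\alpha_n$, observe that the Aluthge transform acts on it as the averaging operator $I+S$ ($S$ the shift), and invert that operator explicitly on completely monotone sequences via the Hausdorff representation $h_n=\int_0^1 t^n\,d\nu$, producing the canonical solution $\tilde g_n=\int_0^1 t^n\,\tfrac{d\nu(t)}{1+t}$ and using $g_n\to 0$ to kill both the atom of $\nu$ at $1$ and the alternating homogeneous solution $(-1)^n c$ of $x_n+x_{n+1}=0$. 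Your argument is cleaner and more diagnostic: it isolates exactly where the limit hypothesis is needed (to rule out the kernel of $I+S$ and the point mass at $1$), and it yields an explicit integral formula for the increments of the pre-image, which could be of independent use (e.g.\ in connection with Theorem \ref{prop:ATbijective}). The paper's approach buys something else: it runs entirely at the level of the finite-difference tests and the quotient/Schur-product machinery already developed, so it transfers with only cosmetic changes to the asymmetric transforms $AT_q$ in Theorem \ref{thm312}, where the averaging operator becomes $(1-q)I+qS$ and your integral inversion would need the kernel $1/(1-q+qt)$ instead (which, to be fair, also works). One small point worth making explicit in your write-up: the equivalence you invoke from \cite{BCR} should be stated in the strong (Hausdorff) sense --- $\psi$ is completely alternating iff $(\psi_{n+1}-\psi_n)_n$ is a nonnegative completely monotone sequence, i.e.\ a Hausdorff moment sequence --- since the paper's definition of completely monotone quantifies only over $n\ge 1$; the two match here because $1$-alternating for $\psi$ supplies the $n=0$ condition for the increments.
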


\begin{proof}[Sketch of Proof]
If $W_\alpha$ is $\mathcal{MID}$, its weights squared sequence is log completely alternating, and it follows from a trivial computation that the weight sequence for $AT(W_\alpha)$,
\begin{equation}    \label{seq:original}
\sqrt{\alpha_0 \alpha_1}, \sqrt{\alpha_1 \alpha_2},\sqrt{\alpha_2 \alpha_3},\sqrt{\alpha_3 \alpha_4},\sqrt{\alpha_4 \alpha_5},\sqrt{\alpha_5 \alpha_6},\sqrt{\alpha_6 \alpha_7}, \ldots ,
\end{equation}
is log completely alternating.

For the other direction, consider the weight sequence for $AT(W_\alpha)$ as in \eqref{seq:original}. \
By taking the usual quotient of successive entries (Corollary \ref{cor:simplyweightquotient}) we obtain that
\begin{equation}
\frac{\sqrt{\alpha_0}}{\sqrt{\alpha_2}}, \frac{\sqrt{\alpha_1}}{\sqrt{\alpha_3}},\frac{\sqrt{\alpha_2}}{\sqrt{\alpha_4}},
\frac{\sqrt{\alpha_3}}{\sqrt{\alpha_5}},
\frac{\sqrt{\alpha_4}}{\sqrt{\alpha_6}} \ldots
\end{equation}
is log completely alternating. \  Taking the Schur product of this last and the tail of \eqref{seq:original} beginning with $\sqrt{\alpha_2 \alpha_3},\sqrt{\alpha_3 \alpha_4}, \ldots$
we obtain that
\begin{equation}    \label{seq:firstprod}
\sqrt{\alpha_0 \alpha_3}, \sqrt{\alpha_1 \alpha_4},\sqrt{\alpha_2 \alpha_5},\sqrt{\alpha_3 \alpha_6},\sqrt{\alpha_4 \alpha_7},\sqrt{\alpha_5 \alpha_8},\sqrt{\alpha_6 \alpha_9}, \ldots
\end{equation}
is log completely alternating. \ Use the operation in Proposition \ref{prop:spacedseqquotient} with $N=3$ applied to  the sequence in \eqref{seq:firstprod} to obtain that
\begin{equation}  \label{seq:secondquo}
\frac{\sqrt{\alpha_0}}{\sqrt{\alpha_6}}, \frac{\sqrt{\alpha_1}}{\sqrt{\alpha_7}},\frac{\sqrt{\alpha_2}}{\sqrt{\alpha_8}},
\frac{\sqrt{\alpha_3}}{\sqrt{\alpha_9}},
\frac{\sqrt{\alpha_4}}{\sqrt{\alpha_{10}}} \ldots
\end{equation}
is log completely alternating. \ Form the Schur product of this with the tail of \eqref{seq:original} beginning with $\sqrt{\alpha_6 \alpha_7},\sqrt{\alpha_7 \alpha_8}, \ldots$. \ This yields that
\begin{equation}    \label{seq:secondprod}
\sqrt{\alpha_0 \alpha_7}, \sqrt{\alpha_1 \alpha_8},\sqrt{\alpha_2 \alpha_9},\sqrt{\alpha_3 \alpha_{10}},\sqrt{\alpha_4 \alpha_{11}},\sqrt{\alpha_5 \alpha_{12}},\sqrt{\alpha_6 \alpha_{13}}, \sqrt{\alpha_7 \alpha_{14}} \ldots
\end{equation}
is log completely alternating.
Repeat the operation from Proposition \ref{prop:spacedseqquotient} on the sequence in \eqref{seq:secondprod}, this time with $N=7$, to yield that the sequence
$$\frac{\sqrt{\alpha_0 \alpha_7}}{\sqrt{\alpha_7 \alpha_{14}}}, \frac{\sqrt{\alpha_1 \alpha_8}}{\sqrt{\alpha_8 \alpha_{15}}}, \ldots,$$
which is
\begin{equation}
\frac{\sqrt{\alpha_0}}{\sqrt{\alpha_{14}}}, \frac{\sqrt{\alpha_1}}{\sqrt{\alpha_{15}}},\frac{\sqrt{\alpha_2}}{\sqrt{\alpha_{16}}},
\frac{\sqrt{\alpha_3}}{\sqrt{\alpha_{17}}},
\frac{\sqrt{\alpha_4}}{\sqrt{\alpha_{18}}} \ldots,
\end{equation}
is log completely alternating.

By continuing this process indefinitely, one may obtain that for some $M$ as large as desired the sequence
\begin{equation}
\frac{\sqrt{\alpha_0}}{\sqrt{\alpha_{M}}}, \frac{\sqrt{\alpha_1}}{\sqrt{\alpha_{M+1}}},\frac{\sqrt{\alpha_2}}{\sqrt{\alpha_{M+2}}},
\frac{\sqrt{\alpha_3}}{\sqrt{\alpha_{M+3}}},
\frac{\sqrt{\alpha_4}}{\sqrt{\alpha_{M+4}}} \ldots,
\end{equation}
is log completely alternating (there is no claim that one can do this for any $M$, but only that we may do it so as to obtain some $M$ large).

Now consider some test for the sequence $\sqrt{\alpha_0}, \sqrt{\alpha_1}, \sqrt{\alpha_2}, \ldots$ to be (say) $2$--log alternating. \  This is (for example)
$$\log \sqrt{\alpha_0} - 2 \log \sqrt{\alpha_1} + \log \sqrt{\alpha_2} \leq 0,$$
and suppose for a contradiction that the left-hand side is strictly positive. \  We have that
\begin{eqnarray*}
0 &\geq& \log \frac{\sqrt{\alpha_0}}{\sqrt{\alpha_M}} - 2\log \frac{\sqrt{\alpha_1}}{\sqrt{\alpha_{M+1}}} + \log \frac{\sqrt{\alpha_2}}{\sqrt{\alpha_{M+2}}}  \\
&=& \log \sqrt{\alpha_0} - 2 \log \sqrt{\alpha_1} + \log \sqrt{\alpha_2} - (\log \sqrt{\alpha_M} - 2 \log \sqrt{\alpha_{M+1}} + \log \sqrt{\alpha_{M+2}})
\end{eqnarray*}
for large $M$ as above. \  But since the weights approach a limit, say $L$, the term $(\log \sqrt{\alpha_M} - 2 \log \sqrt{\alpha_{M+1}} + \log \sqrt{\alpha_{M+2}})$ approaches $0$, and taking into account the assumed signs this yields a contradiction.  
\end{proof}

\begin{cor} \label{cor45}
Suppose $W_\alpha$ is a contractive weighted shift whose weights approach a limit (in particular, if $W_\alpha$ is hyponormal). \  Then $W_\alpha$ is $\mathcal{MID}$ if and only if any of the iterated Aluthge transforms of $W_\alpha$ is $\mathcal{MID}$, and in this case all the iterated Aluthge transforms are $\mathcal{MID}$.
\end{cor}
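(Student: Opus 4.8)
The plan is to bootstrap from Theorem~\ref{th:WvsATW} by induction on the number of iterations of the Aluthge transform. The crux is that the two hypotheses appearing in Theorem~\ref{th:WvsATW}---that the shift be a contraction and that its weights converge---are both preserved by a single application of $AT$, so the theorem may be applied afresh at each stage of the iteration.

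First I would record the relevant closure properties. Recall that $AT(W_\alpha)$ is the weighted shift with weights $\sqrt{\alpha_n \alpha_{n+1}}$. If $W_\alpha$ is a contraction, so that $\alpha_n \le 1$ for all $n$, then each weight satisfies $\sqrt{\alpha_n \alpha_{n+1}} \le 1$, whence $AT(W_\alpha)$ is again a contraction. If moreover $\alpha_n \to L$ as $n \to \infty$, then $\sqrt{\alpha_n \alpha_{n+1}} \to \sqrt{L \cdot L} = L$, so the weights of $AT(W_\alpha)$ also converge (to the same limit). Thus the class of contractive weighted shifts whose weights approach a limit is invariant under $AT$, and by a trivial induction each iterated transform $AT^{(m)}(W_\alpha)$ lies in this same class.

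With these invariances in hand, I would simply chain the equivalences. By Theorem~\ref{th:WvsATW} applied to $AT^{(m)}(W_\alpha)$ (which is legitimate by the previous paragraph), one has $AT^{(m)}(W_\alpha) \in \mathcal{MID}$ if and only if $AT^{(m+1)}(W_\alpha) \in \mathcal{MID}$, for every $m \ge 0$ (with the convention $AT^{(0)}(W_\alpha) := W_\alpha$). Concatenating these biconditionals shows that membership in $\mathcal{MID}$ is equivalent across the entire tower $W_\alpha, AT(W_\alpha), AT^{(2)}(W_\alpha), \ldots$. In particular, if any single iterate is $\mathcal{MID}$, then descending the chain shows $W_\alpha \in \mathcal{MID}$, and ascending the chain shows every iterate is $\mathcal{MID}$ as well.

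There is no genuine obstacle here: the only point requiring care is the verification that contractivity and convergence of the weights persist under $AT$, which is precisely what licenses the repeated application of Theorem~\ref{th:WvsATW}. The convergence of the weights is the essential ingredient, since it is exactly the hypothesis that drives the limiting contradiction argument in the proof of Theorem~\ref{th:WvsATW}; the elementary computation $\sqrt{\alpha_n \alpha_{n+1}} \to L$ guarantees that this ingredient remains available at every level of the iteration.
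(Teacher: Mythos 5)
Your proof is correct and takes essentially the same approach as the paper: the paper's entire proof is the one-line observation that the weights of the iterated Aluthge transforms still approach a limit, leaving the repeated application of Theorem~\ref{th:WvsATW} implicit. You have merely spelled out that induction and the (routine) preservation of contractivity, which is exactly the intended argument.
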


\begin{proof}
It is routine that if the weights of $W_\alpha$ approach a limit, then so do the weights of any of the iterated Aluthge transforms.
\end{proof}

For the next result, we recall that the Schur square root of a weighted shift $W_{\alpha}$ is defined as the weighted shift with weight sequence $\{\sqrt{\alpha_n} \}_{n=0}^{\infty}$.
 
\begin{cor} \label{cor47}
Suppose $W_\alpha$ is a contractive hyponormal weighted shift. \ Then $W_{\alpha}^{(1/2)}$, the Schur square root of $W_\alpha$, is $\mathcal{MID}$ if and only if the Aluthge transform of $W_\alpha$ is $\mathcal{MID}$.
\end{cor}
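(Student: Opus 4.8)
The plan is to route both equivalences through the single statement $W_\alpha \in \mathcal{MID}$, using the contractive characterization of Theorem \ref{th:BCEmomlogID} together with the Aluthge result of Theorem \ref{th:WvsATW}. First I would record the elementary structural facts that make the earlier theorems applicable: since $W_\alpha$ is a contraction we have $\alpha_n \le 1$ for all $n$, whence $\sqrt{\alpha_n} \le 1$ and $\sqrt{\alpha_n \alpha_{n+1}} \le 1$, so that both $W_\alpha^{(1/2)}$ and $AT(W_\alpha)$ are again \emph{contractive} weighted shifts and the contractive form of Theorem \ref{th:BCEmomlogID} applies to each. I would also note that hyponormality gives $\alpha_n \le \alpha_{n+1}$, which combined with $\alpha_n \le 1$ forces the weights to converge; hence the hypothesis of Theorem \ref{th:WvsATW} is satisfied.

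For the Aluthge half, Theorem \ref{th:WvsATW} yields at once that $AT(W_\alpha) \in \mathcal{MID} \Leftrightarrow W_\alpha \in \mathcal{MID}$; this is the only nontrivial ingredient, and it is imported wholesale.

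For the Schur square root half, I would invoke Theorem \ref{th:BCEmomlogID}: $W_\alpha^{(1/2)}$ is $\mathcal{MID}$ if and only if its weight sequence $(\sqrt{\alpha_n})$ is log completely alternating, that is, if and only if the sequence $(\ln \sqrt{\alpha_n}) = (\tfrac{1}{2}\ln \alpha_n)$ is completely alternating. The key observation is that complete alternation is invariant under multiplication by a positive constant, since $T_{ca}(n,k) = c\,T_a(n,k)$ preserves signs; taking $c = \tfrac{1}{2}$, the sequence $(\tfrac{1}{2}\ln \alpha_n)$ is completely alternating exactly when $(\ln \alpha_n)$ is, i.e. exactly when $\alpha$ is log completely alternating. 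Applying Theorem \ref{th:BCEmomlogID} once more, this time to the contraction $W_\alpha$ itself, identifies this condition as $W_\alpha \in \mathcal{MID}$. Chaining the two equivalences then gives
$$
W_\alpha^{(1/2)} \in \mathcal{MID} \;\Leftrightarrow\; W_\alpha \in \mathcal{MID} \;\Leftrightarrow\; AT(W_\alpha) \in \mathcal{MID},
$$
which is the assertion.

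I do not expect a genuine obstacle here: the content is essentially bookkeeping, and the only points demanding care are verifying that the two derived shifts are contractions (so that the contractive version of Theorem \ref{th:BCEmomlogID} is legitimate) and the scale-invariance of complete alternation under the factor $\tfrac{1}{2}$ introduced by the square root. The substantive work has already been done in Theorem \ref{th:WvsATW}, which supplies the nontrivial Aluthge direction and is where the limit hypothesis (guaranteed here by hyponormality) is actually consumed.
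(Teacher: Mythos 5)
Your proposal is correct and follows essentially the same route as the paper: the published proof simply observes that $W_\alpha \in \mathcal{MID}$ if and only if $W_\alpha^{(1/2)} \in \mathcal{MID}$ (the scale-invariance of log complete alternation under the factor $\tfrac{1}{2}$, or equivalently the fact that the Schur powers of $W_\alpha^{(1/2)}$ are just reparametrized Schur powers of $W_\alpha$) and then chains this with Theorem \ref{th:WvsATW}. You have merely made explicit the bookkeeping (contractivity of the derived shifts, convergence of the weights) that the paper leaves tacit.
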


\begin{proof}
Surely $W_\alpha$ is $\mathcal{MID}$ if and only if $W_\alpha^{(1/2)}$ is.  
\end{proof}

We know in general that if the Schur square root of any shift is subnormal, then the Aluthge transform is, but the reverse question is essentially open.

Recall that there are generalizations of the Aluthge transform which are asymmetric: while the usual Aluthge transform of an operator $T$ is $AT(T) = |T|^{1/2} U |T|^{1/2}$ where $T = U |T|$ is the standard polar decomposition of $T$, we may consider, for any $0 < q < 1$, $AT_q(T) = |T|^{q} U |T|^{1-q}$. \ It is straightforward to check that for a weighted shift $W_\alpha$ with weight sequence $\alpha: \alpha_0, \alpha_1, \ldots$ one has
$AT_q(W_\alpha)$  is again a weighted shift with the weight sequence
$$
\alpha_0^{1-q}\alpha_1^{q},\, \alpha_1^{1-q}\alpha_2^{q},\, \alpha_2^{1-q}\alpha_3^{q},\, \ldots.
$$
If $W_\alpha$ is $\mathcal{MID}$, it is immediate that $AT_q(W_\alpha)$ is $\mathcal{MID}$ by considering Schur products. \ With some preliminary work we may obtain a partial converse.

\begin{definition} Given the sequences $a = (a_n)_{n=0}^\infty$ and $b = (b_n)_{n=0}^\infty$, we say that $a$ \textit{negatively dominates} $b$, and write $a \preceq b$, if
$$(\nabla^{m} a)_j \leq (\nabla^{m} b)_j, \qquad (m \in \mathbb{N}_0, j \in \mathbb{N}_0).$$
\end{definition}

The next two results are just computations.

\begin{prop}
Suppose $a = (a_n)_{n=0}^\infty$ is completely alternating and $a \preceq b = (b_n)_{n=0}^\infty$, and let $r$ satisfy $0 \le r \le 1$. \  Then $a-r b = (a_n - r b_n)_{n=0}^\infty$ is completely alternating.
\end{prop}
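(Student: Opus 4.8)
The plan is to reduce the statement to an elementary pointwise inequality by exploiting the linearity of the iterated forward difference operator. Since $\nabla^{n}$ is linear, for every $n \ge 1$ and $k \ge 0$ one has
$(\nabla^{n}(a - rb))_k = (\nabla^{n} a)_k - r(\nabla^{n} b)_k$,
so the entire task is to verify that this quantity is $\le 0$ for all such $n$ and $k$, which is exactly the definition of complete alternation for $a - rb$.

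First I would fix $n \ge 1$ and $k \ge 0$ and abbreviate $A := (\nabla^{n} a)_k$ and $B := (\nabla^{n} b)_k$. The two hypotheses furnish precisely the facts needed: complete alternation of $a$ gives $A \le 0$ (this is where $n \ge 1$ is used), and the negative domination $a \preceq b$ gives $A \le B$. The desired conclusion $(\nabla^{n}(a - rb))_k \le 0$ is then exactly the inequality $A \le rB$.

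The key step is to observe that $rB = (1-r)\cdot 0 + r \cdot B$ is a convex combination of $0$ and $B$, since $0 \le r \le 1$. Because $A$ is a common lower bound for the two endpoints $0$ and $B$, it lies below every convex combination of them, and in particular $A \le rB$. Letting $n$ and $k$ range over all $n \ge 1$ and $k \ge 0$ then shows that $a - rb$ is completely alternating, which is the claim.

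The only subtlety here — and it is a mild one, consistent with the text's remark that this is ``just a computation'' — is that $B = (\nabla^{n} b)_k$ carries no sign information, as nothing is assumed about complete alternation of $b$. A naive term-by-term estimate would therefore split into the cases $B \ge 0$ and $B < 0$; the convex-combination viewpoint handles both uniformly, using $r \ge 0$ to make $rB$ a genuine combination of $0$ and $B$, and $r \le 1$ to keep it between $B$ and $0$. I do not anticipate any genuine obstacle beyond this bookkeeping.
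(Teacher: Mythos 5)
Your proof is correct: linearity of $\nabla^{n}$ reduces everything to the inequality $A \le rB$ given $A \le 0$, $A \le B$, and $0 \le r \le 1$, and your convex-combination argument (equivalently, $rB \ge rA \ge A$) settles it. The paper offers no written proof, dismissing the result as ``just a computation,'' and yours is exactly the computation intended.
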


\begin{cor}
If $(\ln a_n)_{n=0}^\infty$ and $(\ln b_n)_{n=0}^\infty$ are defined, $(\ln a_n)_{n=0}^\infty$ is completely alternating and negatively dominates $(\ln b_n)_{n=0}^\infty$, and $0 \le r \le 1$, then $(\ln(\frac{a_n}{b_n^r}))$ is completely alternating.
\end{cor}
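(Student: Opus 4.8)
The plan is to recognize this corollary as nothing more than the logarithmic restatement of the Proposition immediately preceding it, accessed through one elementary identity. The whole difficulty, such as it is, lies in matching up the notation; once the translation is made, the Proposition does all the work.

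First I would record the identity
$$
\ln\!\left(\frac{a_n}{b_n^{\,r}}\right) = \ln a_n - r\,\ln b_n,
$$
valid precisely under the hypothesis that $\ln a_n$ and $\ln b_n$ are defined. This is the only computational input, and it converts a multiplicative operation on $a$ and $b$ into an additive (affine) operation on their logarithms — exactly the sort of operation governed by the Proposition.

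Next I would set $A := (A_n)_{n=0}^\infty$ and $B := (B_n)_{n=0}^\infty$ with $A_n := \ln a_n$ and $B_n := \ln b_n$, so that $A$ and $B$ are ordinary real sequences. The three hypotheses of the corollary now transcribe verbatim into the three hypotheses of the Proposition applied to $A$, $B$, and $r$: the assumption ``$(\ln a_n)$ is completely alternating'' says that $A$ is completely alternating; the assumption ``$(\ln a_n)$ negatively dominates $(\ln b_n)$'' is, by definition, $A \preceq B$; and $0 \le r \le 1$ is unchanged. Invoking the Proposition then yields that $A - rB = (A_n - r B_n)_{n=0}^\infty$ is completely alternating, and by the identity above $A_n - r B_n = \ln(a_n / b_n^{\,r})$, which is the desired conclusion.

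The main (and essentially only) point deserving care is this notational translation: one must verify that ``completely alternating for $(\ln a_n)$'' in the corollary is literally the statement ``$A$ is completely alternating'' in the Proposition, and similarly for negative domination. No genuine obstacle arises, because the forward-difference operators $\nabla^m$ are linear, so the positivity bookkeeping for the affine combination $A - rB$ has already been carried out in the proof of the Proposition; the corollary merely specializes it to logarithmic sequences.
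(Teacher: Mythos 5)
Your proposal is correct and matches the paper's intent exactly: the paper states this corollary immediately after the proposition on $a - rb$ with no written proof beyond the remark that both results are ``just computations,'' and the intended argument is precisely the substitution $A_n = \ln a_n$, $B_n = \ln b_n$ together with the identity $\ln(a_n/b_n^r) = \ln a_n - r\ln b_n$. Nothing is missing.
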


\begin{prop}
Suppose $a = (a_n)_{n=0}^\infty$ is completely alternating, and $0 \leq r \leq 1$. \  Let $N \in \mathbb{N}_0$. \  Then $a \preceq b$ where $b$ is defined by $b_n := r a_{n+N}$.
\end{prop}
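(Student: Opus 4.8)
The plan is to reduce the claimed inequality $a \preceq b$ to a single scalar inequality by commuting the difference operator with a shift, and then read that inequality off from the monotonicity properties built into a completely alternating sequence. First I would introduce the operator $S$ on sequences by $(Sx)_n := x_{n+1}$, so that the hypothesis $b_n = r\,a_{n+N}$ reads $b = r\,S^N a$. Since $\nabla = I - S$ is a polynomial in $S$, it commutes with $S^N$, and hence $\nabla^m b = r\,S^N \nabla^m a$ for every $m$; evaluating at index $j$ gives
$$(\nabla^m b)_j = r\,(\nabla^m a)_{j+N}, \qquad m,j \in \mathbb{N}_0.$$
Thus verifying $a \preceq b$ is exactly verifying $(\nabla^m a)_j \leq r\,(\nabla^m a)_{j+N}$ for all $m, j \geq 0$.

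Next I would extract two facts from complete alternation of $a$. On the one hand, $(\nabla^m a)_k \leq 0$ for every $m \geq 1$ and every $k$, directly from the definition. On the other hand, for every $m \geq 0$ the inequality $(\nabla^{m+1} a)_k \leq 0$ is precisely $(\nabla^m a)_k \leq (\nabla^m a)_{k+1}$, so the sequence $k \mapsto (\nabla^m a)_k$ is non-decreasing; iterating $N$ steps yields $(\nabla^m a)_j \leq (\nabla^m a)_{j+N}$.

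For any fixed $m \geq 1$ these two facts chain. Non-positivity of $(\nabla^m a)_{j+N}$ together with $0 \leq r \leq 1$ gives $(\nabla^m a)_{j+N} \leq r\,(\nabla^m a)_{j+N}$, and combining this with the monotonicity inequality produces
$$(\nabla^m a)_j \;\leq\; (\nabla^m a)_{j+N} \;\leq\; r\,(\nabla^m a)_{j+N} = (\nabla^m b)_j,$$
which is exactly the required estimate for every $m \geq 1$.

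The one delicate point, and the step I expect to be the genuine obstacle, is the boundary case $m = 0$, where the target inequality becomes $a_j \leq r\,a_{j+N}$. Monotonicity still gives $a_j \leq a_{j+N}$, but passing from $a_{j+N}$ to $r\,a_{j+N}$ now requires $a_{j+N} \leq 0$, which does \emph{not} follow from complete alternation alone (for instance $a_n = n$ is completely alternating yet fails $a_j \leq r\,a_{j+N}$ when $r < 1$ and $N = 0$). In the setting where the proposition is used, however, $a$ is the logarithm of a contractive weight or moment sequence, so $a_n \leq 0$ for every $n$, and the very same two-step chain then closes the $m = 0$ case; this is the only place where the sign of the terms themselves, rather than the signs of the higher differences, is invoked.
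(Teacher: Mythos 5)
Your argument is essentially the paper's: complete alternation at order $m+1$ says exactly that $k \mapsto (\nabla^{m} a)_k$ is non-decreasing, iterating $N$ steps gives $(\nabla^{m} a)_j \leq (\nabla^{m} a)_{j+N}$, and then non-positivity of $(\nabla^{m} a)_{j+N}$ together with $0 \leq r \leq 1$ absorbs the factor $r$. Your flag on the case $m=0$ is a genuine catch rather than a quibble: the paper's definition of $\preceq$ quantifies over $m \in \mathbb{N}_0$, so the proposition as literally stated demands $a_j \leq r\,a_{j+N}$, which fails for instance for $a_n = n$ (completely alternating) with $r<1$; the paper's own proof silently establishes only the $m \geq 1$ inequalities and the $r=1$ version of $m=0$. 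Your repair --- invoking $a_n \leq 0$, which holds in the intended application to logarithms of contractive sequences --- closes the gap; alternatively, one can note that the preceding proposition, which is where $\preceq$ is actually consumed, only ever uses differences of order $m \geq 1$, so the $m=0$ clause could simply be excluded from the definition of negative domination without loss.
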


\begin{proof}
For any $m$, one readily computes that since $(\nabla^{m+1} a)_j \leq 0$ for all $j = 0, 1, \ldots$, $(\nabla^{m} a)_j \leq (\nabla^{m} a)_{j+1}$ for all $j = 0, 1, \ldots$. \  It then follows that $(\nabla^{m} a)_j \leq (\nabla^{m} a)_{j+N}$ for all $j = 0, 1, \ldots$, so $a$ dominates $b$ in the case $r= 1$, and use the corollary.  
\end{proof}

We then have the following.

\begin{cor}  \label{cor:forAsymLCA}
If $(\ln a_n)_{n=0}^\infty$ is defined and completely alternating, then for any $N \in \mathbb{N}_0$ and any $0 \leq r \leq 1$, $\left( \ln \left(\frac{a_n}{a_{n+N}^r}\right)\right)_{n=0}^\infty$ is completely alternating.
\end{cor}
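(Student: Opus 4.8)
The plan is to deduce the statement from the two computational results stated just above it, by choosing the auxiliary sequence to be an index-shift of the given one. Set $c_n := \ln a_n$, so that by hypothesis $c = (c_n)_{n=0}^\infty$ is completely alternating, and record that
\[
\ln\left(\frac{a_n}{a_{n+N}^r}\right) = c_n - r\,c_{n+N}.
\]
Hence it suffices to show that $(c_n - r\,c_{n+N})_{n=0}^\infty$ is completely alternating. I will obtain this from the logarithmic Corollary above (the one asserting that $(\ln(a_n/b_n^r))$ is completely alternating whenever $(\ln a_n)$ is completely alternating and negatively dominates $(\ln b_n)$), taking its ``$a_n$'' to be the present $a_n$ and its ``$b_n$'' to be $a_{n+N}$.

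First I would verify the one hypothesis of that Corollary that is not immediate, namely the negative domination $(\ln a_n) \preceq (\ln a_{n+N})$, i.e. $c \preceq (c_{n+N})_{n=0}^\infty$. This is exactly the content of the Proposition immediately preceding, applied to the completely alternating sequence $c$ with its exponent taken equal to $1$: since $c$ is completely alternating, $(\nabla^{m+1} c)_j \le 0$ for every $m \ge 0$ forces $(\nabla^m c)_j \le (\nabla^m c)_{j+1}$, so each difference $\nabla^m c$ is non-decreasing in its index, and iterating $N$ times gives $(\nabla^m c)_j \le (\nabla^m c)_{j+N}$ for all $m, j \in \mathbb{N}_0$. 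The remaining hypotheses are free: $(\ln a_n)$ and $(\ln a_{n+N})$ are defined because $\ln a_n$ is, $c$ is completely alternating by assumption, and $0 \le r \le 1$ is given.

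With the domination established, the logarithmic Corollary immediately yields that $\left(\ln\left(\frac{a_n}{a_{n+N}^r}\right)\right)_{n=0}^\infty = (c_n - r\,c_{n+N})_{n=0}^\infty$ is completely alternating, which is the assertion.

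I anticipate no genuine obstacle, since the result is just the composition of two lemmas already proved; the only points deserving care are (i) that the domination $c \preceq (c_{n+N})$ must hold at the level $m = 0$ as well, which it does because a completely alternating sequence is in particular non-decreasing (i.e. $1$-alternating); and (ii) keeping the generic exponent of the preceding Proposition (used here with value $1$, so that its dominating sequence is $a_{n+N}$) distinct from the exponent $r$ of the present statement, which enters only through the logarithmic Corollary.
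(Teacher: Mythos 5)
Your proposal is correct and follows exactly the route the paper intends: the paper states this corollary with no written proof, presenting it as an immediate consequence of the preceding Proposition (negative domination of a completely alternating sequence by its $N$-shift) combined with the logarithmic Corollary, which is precisely the composition you carry out. Your added care about the $m=0$ level of the domination (using that a completely alternating sequence is non-decreasing) is a worthwhile detail the paper leaves implicit.
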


We may now give the promised partial generalization of the harder direction of Theorem \ref{th:WvsATW} to some asymmetrical Aluthge transforms.

\begin{thm} \label{thm312}
Suppose $W_\alpha$ is a weighted shift whose weights approach a limit (in particular, if $W_\alpha$ is hyponormal). \ Suppose $0 \le q \le 1/2$. \ If the generalized Aluthge transform $AT_q(W_\alpha)$ is $\mathcal{MID}$ then so is $W_\alpha$.
\end{thm}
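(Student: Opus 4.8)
The plan is to pass to logarithms and reduce the theorem to a sequential claim handled by Corollary \ref{cor:forAsymLCA}. Since neither $\mathcal{MID}$ nor the log completely alternating condition is affected by scaling (cf. Theorem \ref{thm25}), I first normalize so that $W_\alpha$, and hence $AT_q(W_\alpha)$, is a contraction. Writing $x_n := \ln \alpha_n$, the weight sequence of $AT_q(W_\alpha)$ is $\alpha_n^{1-q}\alpha_{n+1}^q$, so its logarithm is $u_n := (1-q)x_n + q\, x_{n+1}$. Because $AT_q(W_\alpha)\in\mathcal{MID}$, Theorem \ref{th:BCEmomlogID} tells me that $u=(u_n)$ is completely alternating, and my goal is to show that $x=(x_n)$ is completely alternating, for then $\alpha$ is log completely alternating and $W_\alpha\in\mathcal{MID}$ by Theorem \ref{thm25}.

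The engine is the fact that $0\le q\le 1/2<1$ lets me invert $u_n=(1-q)x_n+q\,x_{n+1}$ in the forward direction: $x_n=\frac{1}{1-q}u_n-\frac{q}{1-q}x_{n+1}$, and iterating $M$ times yields the telescoping identity
\begin{equation*}
x_n = \sum_{j=0}^{M-1}\frac{(-q)^j}{(1-q)^{j+1}}\,u_{n+j} \; + \; \Big(\frac{-q}{1-q}\Big)^{M} x_{n+M},
\end{equation*}
whose finite sum I call $S^{(M)}_n$. The obstacle is the sign alternation in these coefficients, which prevents me from reading $S^{(M)}$ off as a nonnegative combination of shifts of the completely alternating sequence $u$. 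I resolve this by taking $M$ even and pairing the $j=2i$ and $j=2i+1$ terms; the pair equals $\frac{q^{2i}}{(1-q)^{2i+1}}\big(u_{n+2i}-\frac{q}{1-q}\,u_{n+2i+1}\big)$, with a strictly positive prefactor. This is exactly where $q\le 1/2$ is used: it gives $\frac{q}{1-q}\le 1$, so Corollary \ref{cor:forAsymLCA} (applied to the $AT_q$-weights with $N=1$ and $r=\frac{q}{1-q}$) shows $\big(u_n-\frac{q}{1-q}u_{n+1}\big)_n$, and hence each shifted pair, is completely alternating. As a nonnegative linear combination of completely alternating sequences, $S^{(M)}$ is completely alternating for every even $M$.

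Finally I run the limit argument from Theorem \ref{th:WvsATW}. For any order $m\ge 1$ and position $k$, and even $M$, the identity above gives
\begin{equation*}
(\nabla^m x)_k = (\nabla^m S^{(M)})_k + \Big(\frac{q}{1-q}\Big)^{M}(\nabla^m x_{\cdot+M})_k \le \Big(\frac{q}{1-q}\Big)^{M}(\nabla^m x_{\cdot+M})_k,
\end{equation*}
using $(\nabla^m S^{(M)})_k\le 0$ and $(-q/(1-q))^M=(q/(1-q))^M$. Since the weights converge to a positive limit (automatic when $W_\alpha$ is hyponormal, since then the weights are nondecreasing and bounded), $(x_n)$ converges, so for $m\ge 1$ its $m$-th forward differences $(\nabla^m x_{\cdot+M})_k$ tend to $0$ as $M\to\infty$, while the factor $(q/(1-q))^M\le 1$ stays bounded. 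Letting $M\to\infty$ forces $(\nabla^m x)_k\le 0$; as $m$ and $k$ are arbitrary, $x$ is completely alternating and the proof is complete. The sign alternation in $S^{(M)}$ is the only real difficulty, and the pairing together with the constraint $q\le 1/2$ is precisely what defeats it.
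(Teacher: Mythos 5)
Your proof is correct, and it reaches the conclusion by a genuinely different decomposition than the paper's. The paper (working out $q=1/3$ explicitly and asserting the general case is routine) iterates the machinery of Theorem \ref{th:WvsATW}: it alternately applies the asymmetric quotient operation (Corollary \ref{cor:forAsymLCA} with $N=1,3,7,\ldots$) and Schur products with powers of tails of the $AT_q$-weight sequence, producing log completely alternating sequences of the form $\alpha_n^{1-q}\alpha_{n+M}^{\epsilon_M}$ with $M\to\infty$ and $\epsilon_M\to 0$, and then closes with the same limit argument you use. You instead invert the averaging $u_n=(1-q)x_n+qx_{n+1}$ in closed form, obtaining a geometric telescoping identity whose finite part you show is completely alternating by pairing consecutive terms and applying Corollary \ref{cor:forAsymLCA} exactly once (with $N=1$ and $r=q/(1-q)$). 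This buys a uniform, case-free treatment of all $q\in[0,1/2]$ and makes completely transparent the two places where $q\le 1/2$ enters (so that $r\le 1$, and so that the remainder coefficient $(q/(1-q))^M$ stays bounded); the paper's route has the advantage of staying at the level of weight sequences and Schur products and of reusing verbatim the steps already developed for the symmetric Aluthge transform. One point worth making explicit (the paper is equally silent on it): the limit $L=\lim_n\alpha_n$ must be positive for $x_n=\ln\alpha_n$ to converge, and your parenthetical justifies this only in the hyponormal case; in general it is automatic because $AT_q(W_\alpha)\in\mathcal{MID}$ forces its weights to be nondecreasing, hence bounded below by $\alpha_0^{1-q}\alpha_1^{q}>0$, whence $L>0$.
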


\begin{proof}
For ease of exposition, we will consider the case $q = 1/3$;  the modifications for the general case are routine and consist primarily in realizing that certain constants are bounded above (in fact, by $1$). \  The argument is essentially that of the proof of the result for $q = 1/2$ with modest alterations.

Assume that $AT_{\frac{1}{3}}(W_\alpha)$ is $\mathcal{MID}$, so the sequence
\begin{equation}  \label{eq:qLCA1}
\alpha_0^{\frac{2}{3}} \alpha_1^{\frac{1}{3}}, \alpha_1^{\frac{2}{3}} \alpha_2^{\frac{1}{3}}, \alpha_2^{\frac{2}{3}} \alpha_3^{\frac{1}{3}}, \ldots
\end{equation}
is log completely alternating. \  The goal is to obtain that $\alpha_0, \alpha_1, \ldots$ is log completely alternating, and it is clearly sufficient to obtain that $\alpha_0^{\frac{2}{3}}, \alpha_1^{\frac{2}{3}}, \ldots$ is log completely alternating.
From \eqref{eq:qLCA1}, by using Corollary \ref{cor:forAsymLCA} with $N = 1$ and $r = 1/2$, we obtain that
\begin{equation}
\frac{\alpha_0^{\frac{2}{3}} \alpha_1^{\frac{1}{3}}}{\left(\alpha_1^{\frac{2}{3}} \alpha_2^{\frac{1}{3}}\right)^{1/2}}, \frac{\alpha_1^{\frac{2}{3}} \alpha_2^{\frac{1}{3}}}{\left(\alpha_2^{\frac{2}{3}} \alpha_3^{\frac{1}{3}}\right)^{1/2}}, \frac{\alpha_2^{\frac{2}{3}} \alpha_3^{\frac{1}{3}}}{\left(\alpha_3^{\frac{2}{3}} \alpha_4^{\frac{1}{3}}\right)^{1/2}}, \ldots  = \frac{\alpha_0^{\frac{2}{3}}}{\alpha_2^{\frac{1}{6}}},
\frac{\alpha_1^{\frac{2}{3}}}{\alpha_3^{\frac{1}{6}}},
\frac{\alpha_2^{\frac{2}{3}}}{\alpha_4^{\frac{1}{6}}}, \ldots
\end{equation}
is log completely alternating. \  Upon taking the Schur product of this with
$$(\alpha_2^{2/3} \alpha_3^{1/3})^\frac{1}{4}, (\alpha_3^{2/3} \alpha_4^{1/3})^\frac{1}{4}, \ldots$$
 (which is log completely alternating as it is a tail of \eqref{eq:qLCA1} to a power) we obtain that
\begin{equation}  \label{eq:qLCA2}
\alpha_0^{\frac{2}{3}} \alpha_3^{\frac{1}{12}}, \alpha_1^{\frac{2}{3}} \alpha_4^{\frac{1}{12}}, \alpha_2^{\frac{2}{3}} \alpha_5^{\frac{1}{12}}, \ldots
\end{equation}
is log completely alternating. \  Using this last sequence and Corollary \ref{cor:forAsymLCA} with $N = 3$ and $r = 1/8$ we obtain that
\begin{equation}
\frac{\alpha_0^{\frac{2}{3}}}{\alpha_6^{\frac{1}{96}}},
\frac{\alpha_1^{\frac{2}{3}}}{\alpha_7^{\frac{1}{96}}},
\frac{\alpha_2^{\frac{2}{3}}}{\alpha_8^{\frac{1}{96}}}, \ldots
\end{equation}
is log completely alternating, and using a Schur product of this with
$$(\alpha_6^{2/3} \alpha_7^{1/3})^\frac{1}{64}, (\alpha_7^{2/3} \alpha_8^{1/3})^\frac{1}{64}, \ldots$$
we obtain that
\begin{equation}
\alpha_0^{\frac{2}{3}} \alpha_7^{\frac{1}{192}}, \alpha_1^{\frac{2}{3}} \alpha_8^{\frac{1}{192}}, \alpha_2^{\frac{2}{3}} \alpha_9^{\frac{1}{192}}, \ldots
\end{equation}
is log completely alternating. \  The rest of the argument is as in the proof of Theorem \ref{th:WvsATW};  the only point to note is that the exponents of the second terms in the resulting collection of sequences are always less than $1$.  
\end{proof}

\begin{remark}
{\rm We do not know the status of the obvious question concerning $AT_q$ when $q > 1/2$.} 
\end{remark}

Returning to consideration of the standard Aluthge transform (which we now view as a mapping $AT$ from operators to operators), the result in Theorem \ref{th:WvsATW} taken with the ``scaling'' discussion makes it clear that $AT(\mathcal{MID}) \subseteq (\mathcal{MID})$. \ We consider this mapping briefly as one from weighted shifts to weighted shifts, and what is in some sense the ``inverse'' mapping in which, given a target shift $W_\beta$, we produce a shift $W_\alpha$ such that $AT(W_\alpha) = W_\beta$.

Recall that the relationship between the weights is that if $W_\alpha$ has weight sequence $(\alpha_n)_{n=0}^\infty$ then the weight sequence $\beta$ is
$$
\sqrt{\alpha_0 \alpha_1}, \sqrt{\alpha_1 \alpha_2}, \sqrt{\alpha_2 \alpha_3}, \ldots.
$$
One readily computes from this that in order to have $AT(W_\alpha) = W_\beta$, the weight $\alpha_0$ is a free parameter and all the remaining $\alpha_n$ are then fixed:
\begin{equation}    \label{eq:alphsintermsofbetas}
\alpha_n = \left\{
\begin{array}{cc}
\frac{\prod_{j=0}^{\frac{n-1}{2}} \beta_{2j}^2}{\alpha_0 \prod_{j=1}^{\frac{n-1}{2}} \beta_{2j-1}^2}, & n \, \,  \mbox{\rm odd,} \\
& \\
\frac{\alpha_0 \prod_{j=1}^{\frac{n}{2}}\beta_{2j-1}^2}{\prod_{j=0}^{\frac{n}{2}-1} \beta_{2j}^2}, & n \, \,  \mbox{\rm even.}\\
\end{array}\right.
\end{equation}

Let $\mathcal{L}$ denote the collection of weighted shifts whose weights approach a non-zero limit. \  Let $AT$ denote the Aluthge transform map on the collection of (bounded) weighted shifts. \  We have the following lemma.

\begin{lem}  The map $AT|_\mathcal{L}$ is injective.
\end{lem}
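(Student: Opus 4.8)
The plan is to exploit the explicit description of the $AT$-preimages of a fixed target shift recorded in \eqref{eq:alphsintermsofbetas}. Suppose $W_\alpha, W_{\alpha'} \in \mathcal{L}$ satisfy $AT(W_\alpha) = AT(W_{\alpha'})$, and call the common target weight sequence $\beta$, so that $\sqrt{\alpha_n \alpha_{n+1}} = \beta_n = \sqrt{\alpha'_n \alpha'_{n+1}}$ for every $n$. Then both $\alpha$ and $\alpha'$ are preimages of $W_\beta$, and by \eqref{eq:alphsintermsofbetas} each is completely determined by its initial weight, through the displayed formulas in $\alpha_0$ (respectively $\alpha'_0$), with all the $\beta_j$ the same in the two families. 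The goal is then to show $\alpha_0 = \alpha'_0$, which forces $\alpha = \alpha'$.

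The key observation is the parity-dependent scaling visible in \eqref{eq:alphsintermsofbetas}: for even $n$ the weight $\alpha_n$ is directly proportional to $\alpha_0$, whereas for odd $n$ it is proportional to $\alpha_0^{-1}$ (the $\beta$-products are identical in the two cases and cancel upon forming ratios). Setting $c := \alpha_0/\alpha'_0 > 0$ and dividing the formula for $\alpha_n$ by that for $\alpha'_n$, I would obtain
\[
\alpha_n = c\,\alpha'_n \quad (n \text{ even}), \qquad \alpha_n = c^{-1}\,\alpha'_n \quad (n \text{ odd}).
\]

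Now the hypothesis $W_\alpha, W_{\alpha'} \in \mathcal{L}$ enters decisively. Write $L := \lim_n \alpha_n$ and $L' := \lim_n \alpha'_n$, both nonzero and finite. Letting $n \to \infty$ through the even integers gives $L = cL'$, while letting $n \to \infty$ through the odd integers gives $L = c^{-1}L'$. Since $L' \ne 0$, comparing these two identities yields $c = c^{-1}$, that is $c^2 = 1$; as $c > 0$ we conclude $c = 1$. Hence $\alpha_0 = \alpha'_0$, and therefore $\alpha = \alpha'$, which is exactly injectivity of $AT|_\mathcal{L}$.

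I do not anticipate a serious obstacle here; the argument is short. The only point requiring care is to confirm that the nonzero-limit hypothesis is genuinely used, and indeed it is the mechanism that collapses the one-parameter family of preimages: the single surviving degree of freedom $\alpha_0$ is pinned down precisely by demanding that the even- and odd-indexed subsequential limits coincide. It is worth emphasizing that both the finiteness and the non-vanishing of $L, L'$ are needed, so the conclusion would genuinely fail for the limit $0$ or outside the bounded-with-nonzero-limit setting.
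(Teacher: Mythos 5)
Your argument is correct and is essentially the paper's own proof: both use the explicit formulas \eqref{eq:alphsintermsofbetas} to see that the two preimages differ by a factor $c$ on even indices and $c^{-1}$ on odd indices, and then use the existence of a common nonzero limit to force $c=1$. The only difference is cosmetic bookkeeping (the paper compares the even- and odd-indexed subsequential limits of the second sequence directly).
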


\begin{proof}
Let $W_\beta$ be some bounded shift such that $AT(W_\alpha) = AT(W_{\hat{\alpha}}) = W_\beta$ for some weight sequences $\alpha$ and $\hat{\alpha}$. \  Write $\hat{\alpha}_0 = x \alpha_0$ for some $x > 0$, and suppose $\lim_{n \rightarrow \infty} \alpha_n = L \neq 0$. \  Using the equations above, one obtains that
$$\lim_{j \rightarrow \infty} \hat{\alpha}_{2j} = x L$$
and
$$\lim_{j \rightarrow \infty} \hat{\alpha}_{2j-1} = \frac{1}{x} L.$$
Since the sequence $(\hat{\alpha})_{n=0}^\infty$ has a limit, and $L \neq 0$, clearly $x = 1$ and $(\alpha)_{n=0}^\infty$ and $(\hat{\alpha})_{n=0}^\infty$ coincide.  
\end{proof}


It is reasonable to consider the map $AT$ now restricted to $\mathcal{MID}$ weighted shifts, since we know $AT(\mathcal{MID}) \subseteq \mathcal{MID}$. \  Of course $\mathcal{MID}$ weighted shifts have weights approaching a limit (since they are bounded and hyponormal) so the map is injective. \  For this smaller class, the result above can be improved.

\begin{thm} \label{prop:ATbijective}
The map $AT|_{\mathcal{MID}}$ is a bijection from ${\mathcal{MID}}$ to ${\mathcal{MID}}$.
\end{thm}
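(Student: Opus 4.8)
The plan is to assemble the bijection from three ingredients, two of which are already in hand. First, the preceding lemma gives that $AT|_{\mathcal{L}}$ is injective, and since every $\mathcal{MID}$ shift is bounded and hyponormal its weights approach a limit, so $\mathcal{MID}\subseteq\mathcal{L}$ and $AT|_{\mathcal{MID}}$ is injective. Second, the discussion preceding the statement records $AT(\mathcal{MID})\subseteq\mathcal{MID}$, so the map does land in $\mathcal{MID}$. Hence the only real content is \emph{surjectivity}: given a target $W_\beta\in\mathcal{MID}$, I must produce a preimage $W_\alpha\in\mathcal{MID}$ with $AT(W_\alpha)=W_\beta$. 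Since $AT$ commutes with scaling ($AT(cW_\alpha)=c\,AT(W_\alpha)$) and, by Theorem \ref{thm25} and the scaling remark, $\mathcal{MID}$ is a scaling-invariant notion, I may assume $W_\beta$ is contractive, so $\beta$ is non-decreasing with $\beta_n\le 1$ and $\beta_n\uparrow L_\beta\le 1$.

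For the construction I would use \eqref{eq:alphsintermsofbetas}: a choice of the free parameter $\alpha_0>0$ determines a positive sequence $\alpha$ with $AT(W_\alpha)=W_\beta$, whose even and odd subsequences take the product form $\alpha_{2m}=\alpha_0 P_m$ and $\alpha_{2m+1}=\alpha_0^{-1}Q_m$, where $P_m=\prod_{j=1}^m \beta_{2j-1}^2/\beta_{2j-2}^2$ and $Q_m=\beta_0^2\prod_{j=1}^m \beta_{2j}^2/\beta_{2j-1}^2$. The key observations are that every factor of $P_m$ and of $Q_m$ is $\ge 1$ (so both are non-decreasing), and that $\sum_{n\ge 0}(\ln\beta_{n+1}-\ln\beta_n)=\ln L_\beta-\ln\beta_0<\infty$ is a convergent series of non-negative terms; hence the subsums giving $\ln P_m$ and $\ln Q_m$ converge, so $P_m\uparrow P$ and $Q_m\uparrow Q$, and a telescoping check gives $PQ=L_\beta^2$. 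The decisive choice is then $\alpha_0:=\sqrt{Q/P}$, which synchronizes the two subsequential limits: $\lim_m\alpha_{2m}=\alpha_0 P=\sqrt{PQ}=L_\beta$ and $\lim_m\alpha_{2m+1}=Q/\alpha_0=\sqrt{PQ}=L_\beta$, so $\alpha_n\to L_\beta$ and $W_\alpha$ has weights approaching a limit.

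A pleasant feature I would highlight is that contractivity of the preimage is then free: because $P_m$ and $Q_m$ are non-decreasing, each subsequence increases to its common bound $L_\beta\le 1$, whence $\sup_n\alpha_n\le L_\beta\le 1$ and $W_\alpha$ is a contraction with weights tending to a limit. These are exactly the hypotheses of Theorem \ref{th:WvsATW}, which yields $AT(W_\alpha)\in\mathcal{MID}\Leftrightarrow W_\alpha\in\mathcal{MID}$; since $AT(W_\alpha)=W_\beta\in\mathcal{MID}$ by assumption, we conclude $W_\alpha\in\mathcal{MID}$, so $W_\beta$ lies in the range. The main obstacle is this surjectivity construction, and within it the genuinely delicate point is the convergence of the even and odd partial products together with isolating the unique $\alpha_0$ that forces their limits to agree; once that is done, the monotonicity of $P_m,Q_m$ delivers contractivity automatically and Theorem \ref{th:WvsATW} closes the argument.
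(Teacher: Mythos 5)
Your proof is correct and follows essentially the same route as the paper's: injectivity from the preceding lemma, range containment from Theorem \ref{th:WvsATW}, and surjectivity by showing the even and odd partial products from \eqref{eq:alphsintermsofbetas} are increasing and convergent and then choosing $\alpha_0$ to synchronize their limits. The only difference is cosmetic: the paper normalizes $W_\beta$ so its weights increase to $1$ and takes $\alpha_0=L$, whereas you work with a general contractive $W_\beta$ and take $\alpha_0=\sqrt{Q/P}$, which reduces to the same choice after scaling.
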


\begin{proof} 
From the previous proposition and Theorem \ref{th:WvsATW} we have that the map is injective and that its range is contained in $\mathcal{MID}$. \  For surjectivity, take first the case in which some $W_\beta$ in $\mathcal{MID}$ has weights increasing to $1$. \ Consider the expression for the sequence of odd weights  in \eqref{eq:alphsintermsofbetas} for a candidate pre-image $W_\alpha$, removing for a moment the parameter $\alpha_0$. \  It is easy to see that the resulting sequence is increasing since one moves from one term to the next by multiplying by some $\frac{\beta_{2j}^2}{\beta_{2j-1}^2}$ and the $\beta_k$ are increasing. \  As well, the sequence is bounded above by one, since it is a product of terms of the form $\frac{\beta_{2j}^2}{\beta_{2j+1}^2}$ and a single term $\beta_{2k} <1$. \ Therefore it has a limit $L$ satisfying $0 < L \leq 1$. \  One may similarly see that the sequence for the even weights in \eqref{eq:alphsintermsofbetas} (again removing for the moment $\alpha_0$) is also increasing, bounded above by $\frac{1}{\beta_0^2}$, and therefore has a limit, which turns out to be $\frac{1}{L}$.

Choosing $\alpha_0 = L$, it is easy to see that the candidate sequence $\alpha$ has the limit $1$ and satisfies $AT(W_\alpha) = W_\beta$;  citing again Theorem \ref{th:WvsATW}, $W_\alpha \in \mathcal{MID}$.  

\end{proof}

We pause for a moment to record the pre-images under $AT$ of the ($\mathcal{MID}$) Agler shifts:  recall that the $k$-th Agler shift $A_k$ is that with weight sequence $\{\beta(k)_n\}_{n=0}^\infty$ with $\beta(k)_n = \sqrt{\frac{n+1}{n+k}}$ for $k = 2, 3, \ldots$ and $n = 0, 1, \ldots$. \  (These appear in the foundational paper \cite{Ag} on $n$-contractivity;  the most familiar is 
$A_2$ which is the Bergman shift.)

\begin{prop} \label{prop417} 
For $k = 2, 3, \ldots,$ the Agler shift $A_k$ is the image under $AT$ of the shift $W_{\alpha(k)}$ with initial weight
\begin{equation} \label{eq:alpha0forAk}
\alpha(k)_0 = \ddfrac{\Gamma \left(\frac{k}{2}\right)}{\sqrt{\pi } \Gamma \left(\frac{k+1}{2}\right)}
\end{equation}
and
\begin{equation}\label{eq:alphanforAgk}
\alpha(k)_n = \left\{
\begin{array}{cc}
\ddfrac{\Gamma \left(\frac{n}{2}+1\right) \Gamma \left(\frac{k+n}{2}\right)}{\alpha(k)_0 \Gamma \left(\frac{n+1}{2}\right) \Gamma \left(\frac{1}{2} (k+n+1)\right)}, & n \, \,  \mbox{\rm odd,} \\
& \\
\ddfrac{\alpha(k)_0 \sqrt{\pi}k (\frac{n}{2})! \Gamma\left(\frac{k+1}{2}\right)\Gamma\left(\frac{k+n}{2}\right)}{2 \Gamma\left(\frac{n+1}{2}\right)\Gamma\left(\frac{k}{2}+1\right)\Gamma\left(\frac{k+n+1}{2}\right)}, & n \, \,  \mbox{\rm even,}\\
\end{array}\right.
\end{equation}
\medskip
In particular, the
Bergman shift is the Aluthge transform of the shift with weight sequence beginning
$$
\frac{2}{\pi}, \frac{\pi}{4}, \frac{8}{3 \pi}, \frac{9 \pi}{32}, \frac{128}{45 \pi}, \ldots .
$$
\medskip
The initial weights for the pre-images of the first few Agler shifts are as follows:
$$\begin{array}{l|lllllllllllll}
k & 2 & 3 & 4 & 5 & 6 & 7 & 8 & 9 & 10 &11 & 12 & 13 & 14 \\ \hline
\mbox{\rm initial weight of} &&&&&&&&&&&&& \\
 \mbox{\rm pre-image shift of $A_k$} & \frac{2}{\pi} & \frac{1}{2} & \frac{4}{3 \pi} & \frac{3}{8} & \frac{16}{15 \pi} & \frac{5}{16} & \frac{32}{35 \pi} & \frac{35}{128} & \frac{256}{315 \pi} & \frac{63}{256} & \frac{512}{693 \pi} & \frac{231}{1024} & \frac{2048}{3003 \pi} \\
\end{array}$$
\end{prop}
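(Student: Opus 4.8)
The plan is to read the candidate pre-image weights directly off the inversion formula \eqref{eq:alphsintermsofbetas} and then collapse the resulting finite products into quotients of Gamma functions. I would begin from the surjectivity construction in the proof of Theorem \ref{prop:ATbijective}: since the Agler weights $\beta(k)_n=\sqrt{(n+1)/(n+k)}$ increase to $1$, the relevant pre-image $W_{\alpha(k)}$ is the one whose weights increase to $1$, with $\alpha(k)_0$ the free parameter and every later weight forced by \eqref{eq:alphsintermsofbetas}. Because \eqref{eq:alphsintermsofbetas} was derived exactly so that $AT(W_{\alpha})=W_\beta$, the identity $AT(W_{\alpha(k)})=A_k$ holds by construction, so the whole task reduces to putting \eqref{eq:alphsintermsofbetas} into closed form. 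Substituting $\beta(k)_m^2=(m+1)/(m+k)$ turns $\alpha(k)_n$ into a ratio of products of the shape $\prod (2j+1)/(2j+k)$ and $\prod 2j/(2j-1+k)$, with the even-$n$ and odd-$n$ branches of \eqref{eq:alphsintermsofbetas} giving slightly different products.

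The second step is purely computational: evaluate each such product. Writing $2j+c=2(j+c/2)$ and using the Pochhammer/Gamma identity $\prod_{j=0}^{m}(j+c/2)=\Gamma(m+1+c/2)/\Gamma(c/2)$ (equivalently, the Legendre duplication formula) converts every product into a quotient of Gamma functions with half-integer shifts. Re-expressing $m=(n-1)/2$ in the odd case and $m=n/2$ in the even case, and simplifying with $\Gamma(k/2+1)=(k/2)\Gamma(k/2)$ to match the displayed shape, should reproduce \eqref{eq:alphanforAgk}. I expect this bookkeeping to be the main obstacle: the two branches generate different collections of half-integer Gamma arguments, and one must track the $\sqrt{\pi}$ and duplication constants carefully so that both branches align with a single formula and with the chosen normalization of $\alpha(k)_0$.

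The third step pins down the initial weight. Per the construction in Theorem \ref{prop:ATbijective}, $\alpha(k)_0$ must be selected so that $\alpha(k)_n\to 1$. I would extract $\lim_{n\to\infty}\alpha(k)_n$ from the closed form using the asymptotic $\Gamma(x+a)/\Gamma(x+b)\sim x^{a-b}$, note that the Gamma-ratio tends to a constant (the odd and even subsequences tending to reciprocal constants, consistent with the general ``$L$ versus $1/L$'' behavior in Theorem \ref{prop:ATbijective}), and solve for the value of $\alpha(k)_0$ forcing that limit to equal $1$. This yields \eqref{eq:alpha0forAk}, namely $\alpha(k)_0=\Gamma(k/2)/(\sqrt{\pi}\,\Gamma((k+1)/2))$; membership $W_{\alpha(k)}\in\mathcal{MID}$ is then automatic from Theorem \ref{th:WvsATW}.

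Finally, the Bergman assertion and the table are immediate specializations. Setting $k=2$ in \eqref{eq:alpha0forAk} and \eqref{eq:alphanforAgk} gives the initial weights $2/\pi,\pi/4,8/(3\pi),\ldots$, and evaluating $\Gamma(k/2)/(\sqrt{\pi}\,\Gamma((k+1)/2))$ at $k=2,3,\ldots,14$ via the explicit half-integer values of $\Gamma$ produces the tabulated initial weights, with even $k$ yielding a rational multiple of $1/\pi$ and odd $k$ a plain rational number.
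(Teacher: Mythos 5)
Your proposal is correct and takes essentially the same route as the paper: verify that the closed-form Gamma expressions agree with the inversion formula \eqref{eq:alphsintermsofbetas} applied to the Agler weights, then determine $\alpha(k)_0$ by forcing the limit of the weights to be $1$ via the asymptotic $\Gamma(z+a)/\Gamma(z+b)\sim z^{a-b}$. The paper's proof is just a terser version of the same computation, citing the same DLMF asymptotic.
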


\medskip

\begin{proof}
This is mainly a computation. \  First, one verifies that (in advance of knowing $\alpha(k)_0$) the expressions in \eqref{eq:alphsintermsofbetas} are as claimed in this particular case. \  It then remains, as in the proof of Proposition \ref{prop:ATbijective}, to evaluate the limit (say, of the odd weights);  this is accomplished using that
$$
\frac{\Gamma(z + a)}{\Gamma(z + b)} \sim z^{a-b}\sum_{k=0}^\infty \frac{G_k(a,b)}{z^k},
$$
where $G_0(a,b) = 1$ (cf. \cite[5.11.13]{DLMF}).  
\end{proof}

Note that it is straightforward to compute that the Aluthge transform of the Bergman shift $B$ is $A_3^{(1/2)}$, as might be suspected from the $n=3$ entry in the table.

We now return to the discussion of the Aluthge transform, and remark that if we seek the inverse Aluthge transform of the shift with weights
$$
\sqrt{\frac{2^{n+2}-2}{2^{n+2}-1}}
$$
(those arising from a certain moment sequence with countably atomic Berger measure mentioned in \cite{BCE}, the shift known to be $\mathcal{MID}$), the appropriate $\alpha_0$ is approximately
$$
\alpha_0 \sim 0.7421267409\ldots,
$$
but we do not know if the resulting shift has some simple form.

\smallskip


\subsection{Robustness and rigidity} \label{robust}
The results in the previous subsection may be interpreted to say that the class of $\mathcal{MID}$ shifts is robust under some operations (quotients of weights, for example, as implicit in Proposition \ref{prop:spacedseqquotient}, or the Aluthge transform as in Theorem \ref{th:WvsATW}). \  We record next some further results in this family, some showing ``rigidity'' in the class of $\mathcal{MID}$ shifts, and consider as well back-step extensions and completions.

\begin{prop}  \label{prop:normlimsofinfdivis}
Suppose a shift $W_\alpha$ is the norm limit of contractive $\mathcal{MID}$ weighted shifts $W_{\delta^{(j)}}$. \  Then $W_\alpha$ is $\mathcal{MID}$.
\end{prop}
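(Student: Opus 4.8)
The plan is to show that the property ``log completely alternating'' is preserved under the pointwise limit of the weight sequences, and then to read off the conclusion from Theorem~\ref{th:BCEmomlogID}. The key observation is that each defining inequality for log complete alternation involves only finitely many weights, so it is a closed condition that survives pointwise limits.

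First I would translate norm convergence into convergence of weights. For a weighted shift the operator norm equals the supremum of the absolute values of the weights, so $\|W_{\delta^{(j)}} - W_\alpha\| = \sup_n |\delta_n^{(j)} - \alpha_n|$; hence the hypothesis $\|W_{\delta^{(j)}} - W_\alpha\| \to 0$ is exactly uniform convergence of the weight sequences, and in particular $\delta_n^{(j)} \to \alpha_n$ for each fixed $n$. Continuity of the norm also gives $\|W_\alpha\| = \lim_j \|W_{\delta^{(j)}}\| \le 1$, so $W_\alpha$ is a contraction.

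Next, since each $W_{\delta^{(j)}}$ is contractive and $\mathcal{MID}$, Theorem~\ref{th:BCEmomlogID} shows that each $\delta^{(j)}$ is log completely alternating, i.e.
$$
LT_{\delta^{(j)}}(m,k) = \sum_{i=0}^m (-1)^i \binom{m}{i} \ln \delta^{(j)}_{i+k} \le 0 \qquad (m \ge 1,\ k \ge 0).
$$
For each fixed pair $(m,k)$ the left-hand side is a finite linear combination of logarithms of finitely many weights, hence continuous in those coordinates on $(0,\infty)$; letting $j \to \infty$ and using $\delta_n^{(j)} \to \alpha_n > 0$ yields $LT_\alpha(m,k) \le 0$ for all $m \ge 1$ and $k \ge 0$. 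Thus $\alpha$ is log completely alternating, and, $W_\alpha$ being contractive, Theorem~\ref{th:BCEmomlogID} gives $W_\alpha \in \mathcal{MID}$.

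The only genuine subtlety---and the step I expect to need the most care---is that the logarithm is undefined where a weight vanishes. I would dispose of this as follows: each $\delta^{(j)}$, being $\mathcal{MID}$ and hence hyponormal, is non-decreasing, so the limit $\alpha$ is non-decreasing as well; therefore any vanishing weight forces $\alpha_0 = 0$. But in that case every moment $\gamma_n^{(\alpha)}$ with $n \ge 1$ is zero, so $W_\alpha$, and likewise each power $W_\alpha^{(p)}$, has Berger measure equal to the point mass at the origin and is therefore (trivially) $\mathcal{MID}$. Hence we may assume all $\alpha_n > 0$, and the limiting argument above applies verbatim.
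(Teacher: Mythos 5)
Your proof is correct and follows the same route as the paper: the paper's one-line argument is precisely that each finite test $LT_\alpha(m,k)\le 0$ for log complete alternation is the limit of the corresponding tests for the $\delta^{(j)}$, which you have simply spelled out (norm convergence gives uniform, hence pointwise, convergence of weights, and each test involves only finitely many weights). Your extra care about vanishing weights is harmless but unnecessary, since the paper's standing convention is that all weight sequences are positive.
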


\begin{proof}
Any particular test for the weights $\alpha$ to be $n$--log alternating is the limit of the related tests for the $\delta^{(j)}$ to be $n$--log alternating.  
\end{proof}

\begin{prop}  \label{prop:wstarlimsofIDBerger}
Suppose a weighted shift $W_\alpha$ has a Berger measure $\mu$ which is the weak-* limit of the Berger measures $\mu_n$ of contractive $\mathcal{MID}$ weighted shifts $W_{\delta^{(n)}}$. \  Then $W_\alpha$ is contractive and $\mathcal{MID}$.
\end{prop}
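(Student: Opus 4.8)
The plan is to argue entirely at the level of moments, using Theorem~\ref{thm24}: a contractive weighted shift is $\mathcal{MID}$ if and only if its moment sequence is log completely monotone. This is the natural route here because moments are integrals of the monomials $t^m$ against the Berger measure, so weak-* convergence of the measures is precisely what is needed to control them (this result is thus the moment-measure analogue of the weight-level Proposition~\ref{prop:normlimsofinfdivis}). Write $\mu_n$ for the Berger measure of $W_{\delta^{(n)}}$, and set $\gamma^{(n)}_m := \int t^m \, d\mu_n(t)$ and $\gamma_m := \int t^m \, d\mu(t)$ for the respective moments.

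First I would check contractivity of $W_\alpha$. Each $W_{\delta^{(n)}}$ is a contraction, so $\mu_n$ is a probability measure on $[0,1]$; since $[0,1]$ is compact, testing the weak-* convergence against the constant function $1$ shows that no mass escapes, so $\mu$ is again a probability measure on $[0,1]$. Hence $\gamma_{m+1} = \int t^{m+1}\,d\mu \le \int t^m\,d\mu = \gamma_m$, giving $\alpha_m^2 = \gamma_{m+1}/\gamma_m \le 1$ and contractivity of $W_\alpha$. Moreover, since each $t \mapsto t^m$ is continuous on $[0,1]$, weak-* convergence yields $\gamma^{(n)}_m \to \gamma_m$ as $n \to \infty$, for every fixed $m$.

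Next, each $W_{\delta^{(n)}}$ is contractive and $\mathcal{MID}$, so by Theorem~\ref{thm24} its moment sequence is log completely monotone: $LT_{\gamma^{(n)}}(p,k) \ge 0$ for all $p \ge 1$, $k \ge 0$. Each of these is a fixed finite linear combination of the quantities $\ln \gamma^{(n)}_{i+k}$. Because $W_\alpha$ is an honest weighted shift with positive weights, $\gamma_m > 0$ for every $m$, so $\gamma^{(n)}_m \to \gamma_m > 0$ forces $\ln \gamma^{(n)}_m \to \ln \gamma_m$ by continuity of the logarithm. Passing to the limit in $n$ then gives $LT_\gamma(p,k) = \lim_n LT_{\gamma^{(n)}}(p,k) \ge 0$ for all $p,k$, so the moment sequence of $W_\alpha$ is log completely monotone; a final application of Theorem~\ref{thm24} to the contractive shift $W_\alpha$ completes the argument.

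The only delicate point — and the step I would watch most carefully — is the passage through logarithms, which is legitimate only because the limiting moments $\gamma_m$ are strictly positive (equivalently $\mu \ne \delta_0$), a fact guaranteed by $W_\alpha$ having positive weights. Continuity of $\log$ on a neighborhood of each $\gamma_m$ is exactly what lets the finitely many inequalities defining log complete monotonicity survive the weak-* limit; everything else is a routine limiting argument of the same flavor as Proposition~\ref{prop:normlimsofinfdivis}.
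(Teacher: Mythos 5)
Your proof is correct and follows essentially the same route as the paper's: weak-* convergence gives convergence of moments, hence contractivity from the support in $[0,1]$, and each finite log-monotonicity test for the limit moments is the limit of the corresponding tests for the $\mu_n$, so Theorem \ref{thm24} applies. Your explicit attention to the strict positivity of the limiting moments (so that the logarithms converge) is a point the paper leaves implicit, but it is the same argument.
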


\begin{proof}
Note that the weak-* limit will satisfy that, for any $m$,
$$\lim_{n \rightarrow \infty} \int_0^1 t^m d \mu_n(t) = \int_0^1 t^m d \mu(t).$$
Therefore moments, and hence weights, are approximated in the limit. \  It is clear that $\mu$ has support in $[0,1]$ and therefore $W_\alpha$ is a contraction. \  But now we may cite again that any particular test for the moments $\gamma$ to be $k$--log monotone is the limit of the related tests for the moments arising from $\mu_n$ to be $k$--log monotone.  
\end{proof}

There are results for powers and subshifts. \  First set some notation for subshifts of a weighted shift $W_\alpha$. \  A subshift is a shift whose weight sequence is of the form $\alpha \circ g$, where $g: \mathbb{N}_0 \rightarrow  \mathbb{N}_0$ is increasing and $\circ$ in this setting means composition. \  Call a subshift (or subsequence of weights) a $p$-subshift if $g$ is linear, of the form $g(n) = p n + k$ where $p,k \in \mathbb{N}_0$ and $p \geq 1$. \  Then we have the following.

\begin{prop}  \label{prop:regsubshiftsID}
Suppose $W_\alpha$ is $\mathcal{MID}$. \  Then any $p$-subshift of $W_\alpha$ is $\mathcal{MID}$ and so is any positive integer power (meaning ordinary, not Schur,  power) of $W_\alpha$. \  (Infinite divisibility of the latter has to be interpreted either viewing (say) $W_\alpha^2$ as the direct sum of two weighted shifts (which we know how to raise to Schur powers), or viewing $W_\alpha^n$ as a matrix and viewing a Schur $p$-th power as raising every entry in the matrix to the $p$-th power.)
\end{prop}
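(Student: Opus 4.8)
The plan is to reduce everything to the weight-level characterization of Theorem~\ref{th:BCEmomlogID} together with its scaling-free companion Theorem~\ref{thm25}: a shift is $\mathcal{MID}$ precisely when its sequence of log-weights $(\ln \alpha_n)$ is completely alternating, and boundedness is automatic here since every weight in sight is a product or an arithmetic subsample of the (bounded) $\alpha_n$. Thus it suffices to show that the relevant log-weight sequences inherit complete alternation, and for this I will lean on two elementary closure properties: complete alternation is preserved under arithmetic subsampling, and under finite addition. The latter is immediate from linearity of $\nabla$, since $(\nabla^{m}(a+b))_k = (\nabla^{m}a)_k + (\nabla^{m}b)_k \le 0$ for $m \ge 1$ whenever $a,b$ are completely alternating.

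For the $p$-subshift, write the subsampled sequence as $b_n := a_{pn+k}$ with $a = (\ln \alpha_n)$, let $S$ denote the forward shift on sequences, $(Sa)_j := a_{j+1}$, so that $\nabla = I - S$, and write $\tilde\nabla$ for the difference operator of $b$. A one-line induction gives $(\tilde\nabla^{m} b)_n = [(I-S^p)^m a]_{pn+k}$. Now factor
\[
(I-S^p)^m = (I-S)^m\Bigl(\sum_{i=0}^{p-1} S^i\Bigr)^{\!m} = \nabla^m\, Q(S),
\]
where $Q(S) = \bigl(\sum_{i=0}^{p-1} S^i\bigr)^m$ is a polynomial in $S$ with nonnegative coefficients. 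Hence $[(I-S^p)^m a]_{pn+k}$ is a nonnegative combination of the quantities $(\nabla^m a)_i$, each of which is $\le 0$ for $m \ge 1$ because $a$ is completely alternating; so $(\tilde\nabla^{m} b)_n \le 0$ and $b$ is completely alternating. Applied to $a = (\ln \alpha_n)$, this shows the $p$-subshift has log-weights completely alternating, hence is $\mathcal{MID}$. (One may argue instead through the Hausdorff-moment picture: for each $t > 0$, $\alpha_n^{-t} = \int_0^1 s^n\, d\nu_t(s)$ is completely monotone, and pushing $s^k\, d\nu_t$ forward under $s \mapsto s^p$ exhibits $\alpha_{pn+k}^{-t}$ as a Hausdorff moment sequence, so $(\ln \alpha_{pn+k})$ is completely alternating by \cite{BCR}.)

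For a positive integer power, decompose $W_\alpha^{\,n} \cong \bigoplus_{r=0}^{n-1} W_r$, where $W_r$ acts on the closed span of $\{e_{r+nm} : m \ge 0\}$ and is the ordinary weighted shift with weights $c^{(r)}_m = \prod_{i=0}^{n-1} \alpha_{nm+r+i}$. Then $\ln c^{(r)}_m = \sum_{i=0}^{n-1} \ln \alpha_{nm+r+i}$ is a finite sum of $n$-subshifts of $(\ln \alpha_n)$, each completely alternating by the previous paragraph; by additivity the sum is completely alternating, so each $W_r$ is $\mathcal{MID}$, which gives the claim under the direct-sum reading. Under the matrix reading the statement is even more direct: the Schur $p$-th power of the band matrix of $W_\alpha^{\,n}$ is exactly the matrix of $\bigl(W_{\alpha^{(p)}}\bigr)^{n}$, and since $W_\alpha \in \mathcal{MID}$ makes $W_{\alpha^{(p)}}$ subnormal for every $p > 0$, and ordinary powers of subnormals are subnormal (restrict powers of the normal extension), the Schur $p$-th power is subnormal for all $p$.

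The only genuine work is the subsampling step; once the factorization $(I-S^p)^m = \nabla^m\bigl(\sum_i S^i\bigr)^m$ is in place it is routine, and the additivity and decomposition steps are bookkeeping. The one mild point to watch is the interpretation of ``$\mathcal{MID}$'' for a power (which is not itself a unilateral weighted shift in the usual sense), and this is why I record both the direct-sum and the matrix readings and verify that they agree.
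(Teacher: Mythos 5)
Your proof is correct, and its subshift half takes a genuinely different route from the paper's. The paper proves the $p$-subshift claim by induction on the order of log-alternation: it reduces the test $LT_\beta(n+1,k)$ for the subshift to a test $LT_\delta(n,k)$ for an auxiliary shift $W_\delta$ produced by the quotient operation of Proposition~\ref{prop:spacedseqquotient} with $N=p$. Your factorization $(I-S^p)^m=\nabla^m\bigl(\sum_{i=0}^{p-1}S^i\bigr)^m$ instead exhibits each subshift test $(\tilde\nabla^m b)_n$ directly as a nonnegative combination $\sum_l q_l\,(\nabla^m a)_{pn+k+l}$ of tests for the original log-weight sequence \emph{at the same order} $m$, with $q_l$ the coefficients of $(1+x+\cdots+x^{p-1})^m$. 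This is worth emphasizing: the remark immediately following the proposition describes precisely this strategy --- writing each subshift log-weight expression as a positive-coefficient sum of log-weight expressions of the original shift with the same ``$n$'' --- and says the coefficients are nontrivial and that the authors ``do not know how to succeed in this approach.'' Your observation that the coefficients are manifestly nonnegative because they come from a power of a nonnegative polynomial in the shift $S$ settles that question cleanly, and the commutation $\nabla^m Q(S)=Q(S)\nabla^m$ is automatic since both are polynomials in $S$. For the powers, your argument is essentially the paper's (decompose $W_\alpha^n$ as a direct sum of weighted shifts, per \cite{CPa}); you merely carry out explicitly, in the log domain, the Schur-product/additivity step the paper leaves implicit, and the matrix-reading verification via $(W_{\alpha^{(p)}})^n$ is a harmless and correct addition. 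Your appeal to Theorem~\ref{thm25} to dispose of contractivity issues is also sound, since all the derived weight sequences are bounded whenever $\alpha$ is.
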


\smallskip

\begin{proof}
For the subshift claim we shall use the characterization in Theorem \ref{th:BCEmomlogID} that the weights squared be log completely alternating, and will use induction on $n$ for log $n$--alternating for all $n$. \  First, if $W_\alpha$ is $\mathcal{MID}$ it is subnormal (hence hyponormal, with non-decreasing weights); any $p$-subshift is therefore log $1$--alternating. \  Suppose now that for any $\mathcal{MID}$ weighted shift and any $p$-subshift $W_\beta$, the subshift is log $n$--alternating. \  Consider some expression $LT_\beta(n+1, k)$ to be tested for negativity. \ Apply the quotient operation of Proposition \ref{prop:spacedseqquotient} to the tail of the weight sequence $\alpha$ starting at $\alpha_k$ with $N$ set to $p$ to generate a new weighted shift $W_\delta$, which is $\mathcal{MID}$ by that proposition. \  It is an easy computation to show that $LT_\beta(n+1, k)$ is exactly the expression $LT_\delta(n, k)$ and is therefore non-positive by the induction hypothesis.

The result for powers follows from regarding some positive integer power of $W_\alpha$ as a direct sum of certain weighted shifts (see \cite{CPa}).  
\end{proof}

\begin{remark}
{\rm Observe that since we know that certain shifts $S(a,b,c,d)$ (as in Definition 2.5 of \cite{CPY}) are $\mathcal{MID}$ (Corollary 3.2 of \cite{BCE}) we obtain as a consequence that all $p$-subshifts of some $S(a,b,c,d)$ are $\mathcal{MID}$ and therefore subnormal, generalizing \cite[Theorem 2.13]{CPY}. \  Note also that, in that paper, the examples of subnormal shifts for which a $p$-subshift is not subnormal have finitely atomic Berger measures which are (as expected, cf. \cite{CuEx}) not $\mathcal{MID}$.

Evidence from \cite{Wol} suggests strongly that there is another approach to the proof of the $p$-subshift portion of Proposition \ref{prop:regsubshiftsID}, which is to show any of the $p$--subshift log-weight expressions to be tested for alternating-ness is in fact a sum, with positive coefficients, of log-weight expressions for the original shift (in fact, using the same ``$n$''). \  The coefficients are non-trivial, however, and we do not know how to succeed in this approach.

Finally, we have no example of a subnormal weighted shift each of whose $p$-subshifts is subnormal which is not, in fact, $\mathcal{MID}$.} 
\end{remark}

We now turn our attention to the matter of ``back-step extensions'' of $\mathcal{MID}$ weighted shifts; recall that this means prefixing a weight to the given weight sequence, and asking for properties of interest for the new shift. \  Here the motivating question is whether one may take an $\mathcal{MID}$ contractive weighted shift and produce a back-step extension which is subnormal but not $\mathcal{MID}$. \  Of course a back-step extension to subnormality is not always possible, and the relevant ``modulus of subnormality'' is to be found in \cite[Proposition 8]{Cu} in terms of an integral. \  Our only information on the question is the following.

\begin{prop}  Let $W$ be any regular subshift of one of the Agler shifts (which we know are $\mathcal{MID}$). \  Then any subnormal back-step extension of $W$ is $\mathcal{MID}$. \ (Note that if the ``previous weight'' of the subshift would require a negative index, no subnormal extension is possible at all.) 
\end{prop}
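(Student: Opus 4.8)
The plan is to combine Proposition \ref{prop:regsubshiftsID} (a $p$-subshift of an $\mathcal{MID}$ shift is again $\mathcal{MID}$) with an exact computation of how far back one may step and still remain subnormal. Write the regular ($p$-)subshift via $g(n) = pn + m$, so that $W = W_w$ has weights $w_n = \sqrt{\tfrac{pn+m+1}{pn+m+k}}$, namely the weights of the Agler shift $A_k$ sampled along an arithmetic progression. A back-step extension prepends a single weight $x$, and by Theorem \ref{th:BCEmomlogID} it suffices to show that every $x$ producing a subnormal shift in fact produces a log completely alternating weight sequence.

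First I would identify the Berger measure of $W$. Its moments are $\gamma_n = \prod_{j=0}^{n-1}\tfrac{pj+m+1}{pj+m+k} = \tfrac{(a)_n}{(b)_n}$, where $a = \tfrac{m+1}{p}$, $b = \tfrac{m+k}{p}$, and $(\cdot)_n$ denotes the Pochhammer symbol; these are precisely the moments of the Beta density $d\mu_W(t) = \tfrac{t^{a-1}(1-t)^{b-a-1}}{B(a,b-a)}\,dt$ on $[0,1]$, which is absolutely continuous with no mass at $0$. By the integral form of the modulus of subnormality \cite[Proposition 8]{Cu}, the prepended weight $x$ gives a subnormal shift if and only if $\tfrac1t \in L^1(\mu_W)$ and $x^2 \le \left(\int \tfrac1t\,d\mu_W\right)^{-1}$. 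A Beta-function computation gives $\int \tfrac1t\,d\mu_W = \tfrac{B(a-1,b-a)}{B(a,b-a)} = \tfrac{b-1}{a-1}$, finite exactly when $a > 1$, i.e. when $m \ge p$. This recovers the parenthetical remark: $a \le 1$ (no subnormal extension at all) corresponds to $m - p < 0$, the case in which the ``previous weight'' would require a negative index. When $m \ge p$ the subnormality threshold is $x^2 \le \tfrac{a-1}{b-1} = \tfrac{(m-p)+1}{(m-p)+k} = \beta(k)_{m-p}^2$, the square of the genuine missing subshift weight.

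Two remarks then close the argument. First, prepending exactly $x = \beta(k)_{m-p}$ reproduces the $p$-subshift of $A_k$ with offset $m - p \ge 0$, which is $\mathcal{MID}$ by Proposition \ref{prop:regsubshiftsID}; consistently, its Berger measure is the absolutely continuous Beta density with no atom at $0$, which is exactly what marks it as the extremal subnormal back-step extension. Second, the only log-alternating tests for the extended sequence $x, w_0, w_1, \dots$ that involve $x$ are $LT(n,0) = \ln x + \sum_{i=1}^n (-1)^i \binom{n}{i}\ln w_{i-1}$ (the tests $LT(n,\ell)$ with $\ell \ge 1$ reduce to tests for $W$ and hold since $W \in \mathcal{MID}$), and these are strictly increasing in $x$. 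Because they are $\le 0$ at the extremal value $x = \beta(k)_{m-p}$, they stay $\le 0$ for every admissible $x \le \beta(k)_{m-p}$. Hence every subnormal back-step extension has a log completely alternating weight sequence and therefore lies in $\mathcal{MID}$.

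I expect the main obstacle to be pinning the subnormality threshold to the value $\beta(k)_{m-p}$ rather than to something strictly larger. This is exactly where the special structure of the Agler shifts enters: the Berger measure is a Beta distribution whose density vanishes like $t^{a-1}$ at the origin, so the extremal extension carries no atom at $0$ and coincides with the honest $p$-subshift one further step back. For a general subnormal --- or even $\mathcal{MID}$ --- shift the subnormality and log-complete-alternation thresholds need not agree (leaving a gap of extensions that are subnormal but not $\mathcal{MID}$); the content of the proposition is that for regular subshifts of Agler shifts these two thresholds coincide.
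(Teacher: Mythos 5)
Your proposal is correct and follows essentially the same route as the paper: identify the cutoff for a subnormal back-step extension (via the Berger measure of the regular subshift) as exactly the omitted ``previous'' weight $\beta(k)_{m-p}$, observe that the extremal extension is itself a $p$-subshift of $A_k$ and hence $\mathcal{MID}$ by Proposition \ref{prop:regsubshiftsID}, and then note that lowering the initial weight preserves moment infinite divisibility. The only difference is that you carry out explicitly (the Beta Berger measure, the $\int t^{-1}\,d\mu$ computation, and the monotonicity of $LT(n,0)$ in the zeroth weight) what the paper delegates to citations of \cite{CD} and \cite[Cor.~3.3]{BCE}.
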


\begin{proof}
From \cite{CD} we obtain the Berger measure of the regular subshifts, and compute that if a subnormal back-step extension is possible at all then the cutoff is exactly the ``previous'' (omitted) weight in the subsequence in question;  a back-step extension with this weight which is a subnormal extension is therefore $\mathcal{MID}$. \  The only flexibility allowed to obtain a subnormal back-step extension is to decrease the zeroth weight below the cutoff. \ But we know that a reduction in the zeroth weight preserves moment infinite divisibility by \cite[Cor. 3.3]{BCE}. \  (Note that if the subshift begins with the zeroth weight, no subnormal extension is possible at all.)   
\end{proof}

We may also consider the analog of the classical completion question:  given some initial segment of weights, when it is possible to complete these to the weights for a subnormal (here, $\mathcal{MID}$) weighted shift?  There are two answers in the case of two given weights $\alpha_0 < \alpha_1$, one more interesting than the other.

For the uninteresting one, suppose we have $0 < \sqrt{a} < \sqrt{b} < 1$. \  First, it is easy to check that any two-atomic measure with an atom at zero gives an $\mathcal{MID}$ weighted shift. \ Express the measure as $c_0 \delta_0 + c_1\delta_r$, where $r \in (0, 1]$ and $c_0, c_1 \in (0,1)$ with $c_0 + c_1 = 1$. \  The weights $\alpha$ are $\sqrt{c_1 r}, \sqrt{r}, \sqrt{r}, \ldots$. \   Then $\alpha_0^2 - \alpha_1^2 = r(c_1-1) < 0$, and $\alpha_j^2 - \alpha_{j+1}^2 = 0$ for $j \geq 1$. \  Using this and the standard recurrence, one finds $\alpha_0^2 - 2 \alpha_1^2 + \alpha_2^2 = r(c_1-1) < 0$, and $\alpha_j^2 - 2 \alpha_{j+1}^2 + \alpha_{j+2}^2 = 0$, $j \geq 1$. \  Continuing, one produces that the weights-squared sequence is completely alternating, hence log completely alternating, and we have that the shift is $\mathcal{MID}$.

But to use this for the given $a$ and $b$, we must simply arrange $c_1$ and $r$ so that $a = c_1 r$ and $b = r$, and we have the completion to an $\mathcal{MID}$ weighted shift, but the completion is flat (see the discussion after Question \ref{question:3wtscomp}) and in that sense trivial.

There is a more interesting completion, and the idea is this:  first, any $a$ and $b$ as above such that both $a$ and $b$ are rational numbers have a nontrivial such completion, because $\sqrt{a}$ and $\sqrt{b}$ are two weights (of course not necessarily successive) in one of the Agler shifts. \  By taking the $p$-subshift of that Agler shift which has these as the first two weights, we get an $\mathcal{MID}$ shift because we know that $p$-subshifts of the Agler shifts are $\mathcal{MID}$. \  For more general $a$ and $b$, the idea will be to approximate with Agler subshifts and take a weak-* limit of Berger measures, using weak-* compactness. \  Some care must be taken that we do not collapse to the trivial completion above, and for this we require a lemma.

\begin{lem}  Suppose $A_m$ is one of the Agler shifts and we consider the weights $\sqrt{p} = \sqrt{\frac{n+1}{m+n}}$ and $\sqrt{q} = \sqrt{\frac{n +\Delta + 1}{m+n + \Delta}}$, as the first two weights of a potential sub-shift, where $\Delta$ is the spacing of the sub-shift and $\Delta \geq 1$. \  Observe that the third weight of the sub-shift is $\sqrt{r} = \sqrt{\frac{n +2\Delta + 1}{m+n + 2\Delta}}$. \  Let $G_1 = q-p$ (the ``gap'' between the first two weights) and $G_2 = r - q$, the second gap. \  Then
\begin{equation}      \label{eq:GapFormula}
\frac{G_2}{G_1} = \frac{1-q}{1-2p+q}.
\end{equation}
In particular, this fraction is bounded away from zero for any fixed $p$ and $q$ in $(0,1)$. \  Further, the expression on the right hand side of \eqref{eq:GapFormula} is increasing in $p$ and decreasing in $q$.
\end{lem}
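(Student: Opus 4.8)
The plan is to reduce the entire lemma to one structural observation: the reciprocals of the ``defects'' $1 - f(j)$ of the Agler weights are affine in the subshift index. Write $f(j) := \frac{n+j+1}{m+n+j}$, so that $p = f(0)$, $q = f(\Delta)$, and $r = f(2\Delta)$, and note that $f(j) = 1 - \frac{m-1}{m+n+j}$. Setting $c := m-1$ (with $c \geq 1$ since $m \geq 2$, which also explains why the weights increase) and $s := m+n$, one gets $1 - p = \frac{c}{s}$, $1-q = \frac{c}{s+\Delta}$, and $1-r = \frac{c}{s+2\Delta}$; equivalently, $\frac{1}{1-p}, \frac{1}{1-q}, \frac{1}{1-r}$ form an arithmetic progression with common difference $\Delta/c$. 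This is the only fact I need, and everything else is bookkeeping.

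First I would compute the two gaps directly in these variables. Since $G_1 = q - p = \frac{c}{s} - \frac{c}{s+\Delta} = \frac{c\Delta}{s(s+\Delta)}$ and likewise $G_2 = r - q = \frac{c\Delta}{(s+\Delta)(s+2\Delta)}$, the common factor $c\Delta$ cancels and
\begin{equation*}
\frac{G_2}{G_1} = \frac{s(s+\Delta)}{(s+\Delta)(s+2\Delta)} = \frac{s}{s+2\Delta}.
\end{equation*}
Next I would evaluate the right-hand side of \eqref{eq:GapFormula} in the same variables. Regrouping $1 - 2p + q = (1-p) + (q-p) = \frac{c}{s} + \frac{c\Delta}{s(s+\Delta)} = \frac{c(s+2\Delta)}{s(s+\Delta)}$ (it is worth writing this step explicitly to keep the signs transparent), and recalling $1-q = \frac{c}{s+\Delta}$, we obtain $\frac{1-q}{1-2p+q} = \frac{c}{s+\Delta}\cdot\frac{s(s+\Delta)}{c(s+2\Delta)} = \frac{s}{s+2\Delta}$, which matches the computed ratio and proves \eqref{eq:GapFormula}.

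For the remaining two assertions I would work directly with the closed form $\frac{1-q}{1-2p+q}$ as a function of the real variables $p,q$ on the region $0 < p < q < 1$ (this is the range forced by $\Delta \geq 1$, since $f$ is strictly increasing). Because $q < 1$ gives $1-q > 0$, while $1 - 2p + q = (1-p) + (q-p) > 0$ as $p < 1$ and $q > p$, the fraction is a strictly positive real number for each admissible $(p,q)$; in particular it is bounded away from zero once $p$ and $q$ are fixed. Monotonicity is then an immediate derivative check: $\frac{\partial}{\partial p}\frac{1-q}{1-2p+q} = \frac{2(1-q)}{(1-2p+q)^2} > 0$ and $\frac{\partial}{\partial q}\frac{1-q}{1-2p+q} = \frac{2(p-1)}{(1-2p+q)^2} < 0$, using $q < 1$ and $p < 1$ respectively.

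I do not expect a genuine obstacle here: once the affine-reciprocal structure is noticed, \eqref{eq:GapFormula} is a one-line cancellation and the positivity and monotonicity statements are elementary sign computations. The only place warranting a little care is the regrouping $1 - 2p + q = (1-p) + (q-p)$, which simultaneously makes the positivity of the denominator manifest and produces its factored form $\frac{c(s+2\Delta)}{s(s+\Delta)}$; I would display that step rather than leave it implicit, so as to avoid any sign slip in the final division.
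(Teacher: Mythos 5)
Your proof is correct and follows essentially the same route as the paper's: both compute $G_1=\frac{\Delta(m-1)}{(m+n)(m+n+\Delta)}$ and $G_2=\frac{\Delta(m-1)}{(m+n+\Delta)(m+n+2\Delta)}$ and reduce the claim to the identity $\frac{G_2}{G_1}=\frac{m+n}{m+n+2\Delta}$. The only difference is cosmetic: the paper verifies the final identity by solving for $n$ and $\Delta$ in terms of $p$, $q$, $m$ and substituting, whereas you express $1-q$ and $1-2p+q$ directly in terms of $s=m+n$, $c=m-1$, $\Delta$ via the arithmetic-progression observation on the reciprocals $\frac{1}{1-p},\frac{1}{1-q},\frac{1}{1-r}$, which is a slightly cleaner piece of bookkeeping and also makes the positivity of the denominator transparent.
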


\begin{proof}
Supposing the expression in \eqref{eq:GapFormula} to be correct, the increasing and decreasing claims are merely a matter of derivatives. \  The rest is mostly calculation:  first,
$$G_2 = \frac{\Delta  (m-1)}{(\Delta +m+n) (2 \Delta +m+n)}$$
and
$$G_1 = \frac{\Delta  (m-1)}{(m+n)(\Delta +m+n)}.$$
It follows that
$$\frac{G_2}{G_1} = \frac{m+n}{2 \Delta +m+n}.$$
One shows that to have the right weights requires
$$n = \frac{1-p m}{p-1}$$
and
$$\Delta = \frac{-q m -q n+n+1}{q-1}.$$
Inserting these into the expression for $\frac{G_2}{G_1}$ yields the claim.  
\end{proof}

\medskip

\begin{prop} \label{prop322}
Given values $a$ and $b$ satisfying $0 < a < b < 1$ it is possible to extend $\sqrt{a}, \sqrt{b}$ to the weight sequence of an $\mathcal{MID}$ contractive weighted shift which is not flat.
\end{prop}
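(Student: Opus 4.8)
The plan is to realize the pair $\sqrt{a},\sqrt{b}$ as the first two weights of a $p$-subshift of an Agler shift in the rational case, and then pass to a limit for irrational data, using the preceding lemma to guarantee that the limiting completion does \emph{not} collapse to the flat (two-atomic, atom-at-zero) completion described before the lemma. First I would handle the case where $a$ and $b$ are both rational: since $\sqrt a=\sqrt{\frac{n+1}{m+n}}$ and $\sqrt b=\sqrt{\frac{n+\Delta+1}{m+n+\Delta}}$ can be solved for integer parameters $m,n$ and spacing $\Delta\ge 1$ (as in the formulas $n=\frac{1-pm}{p-1}$ and $\Delta=\frac{-qm-qn+n+1}{q-1}$ from the lemma, choosing $m$ large enough that $n,\Delta$ are positive integers), the $\Delta$-subshift of the Agler shift $A_m$ beginning at index $n$ has $\sqrt a,\sqrt b$ as its first two weights. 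By Proposition \ref{prop:regsubshiftsID}, every $p$-subshift of an Agler shift is $\mathcal{MID}$, so this is an $\mathcal{MID}$ completion, and because the third weight $\sqrt r$ with $r>b$ is strictly larger than $b$ (the weights of an Agler subshift are strictly increasing), the completion is not flat.

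For general $a,b$ I would approximate: choose rational pairs $(a_j,b_j)\to(a,b)$ with $0<a_j<b_j<1$, and for each $j$ produce as above an $\mathcal{MID}$ Agler-subshift completion $W_{\delta^{(j)}}$ with Berger measure $\mu_j$ supported in $[0,1]$. By weak-$*$ compactness of probability measures on the compact set $[0,1]$, a subsequence of $\mu_j$ converges weak-$*$ to some probability measure $\mu$; the associated moments converge, hence the first two weights converge to $\sqrt a,\sqrt b$, and by Proposition \ref{prop:wstarlimsofIDBerger} the limiting shift $W_\alpha$ is contractive and $\mathcal{MID}$. This produces an $\mathcal{MID}$ completion of $\sqrt a,\sqrt b$; the only remaining issue is to ensure it is \emph{not} flat.

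The main obstacle is precisely ruling out collapse to a flat completion in the limit, since flatness would be signalled by the second gap $G_2=r-b$ shrinking to zero even though $G_1=b-a$ stays positive. This is exactly where the lemma is essential: it gives the uniform lower bound
\begin{equation}
\frac{G_2}{G_1}=\frac{1-q}{1-2p+q}
\end{equation}
with $p=a,\ q=b$, an expression bounded away from zero for fixed $p,q\in(0,1)$ and (crucially) depending only on $p,q$ and not on the spacing $\Delta$ or the Agler index $m$. Thus along the approximating sequence $G_2^{(j)}/G_1^{(j)}$ is bounded below by a quantity converging to $\frac{1-b}{1-2a+b}>0$; since $G_1^{(j)}=b_j-a_j\to b-a>0$, the third weights stay bounded away from $b$ in the limit, so the limiting third weight $\sqrt r$ satisfies $r>b$ and the completion $W_\alpha$ is not flat. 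The monotonicity assertions of the lemma (increasing in $p$, decreasing in $q$) let one make the lower bound uniform over the approximating pairs once $a_j,b_j$ are taken close enough to $a,b$, completing the argument.
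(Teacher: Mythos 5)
Your proposal is correct and follows essentially the same route as the paper: realize the rational pairs as first two weights of Agler $p$-subshifts (which are $\mathcal{MID}$ by Proposition \ref{prop:regsubshiftsID}), pass to a weak-$*$ limit of Berger measures and invoke Proposition \ref{prop:wstarlimsofIDBerger}, and use the gap-ratio lemma's uniform lower bound on $G_2/G_1$ (depending only on $p,q$, not on $m$ or $\Delta$) to rule out collapse to a flat completion. The only cosmetic difference is that you treat the exactly-rational case as a separate first step, whereas the paper folds it into the approximation argument.
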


\begin{proof}
Observe that it is easy to show that the weights squared of the $m$-th Agler shift (together with $1$) form a partition of $[\frac{1}{m}, 1]$ whose largest width is $\frac{m-1}{m (m+1)}$, which clearly goes to zero as $m$ becomes large. \  Thus, given $0 < a < b < 1$, we may clearly approximate $a$ and $b$ as closely as we like with weights squared of one of the Agler shifts. \  Further, we may take care to ensure that we approximate $a$ using a weight squared which is smaller, and to approximate $b$ using a weight squared which is larger. \ We will choose rational numbers $p_n$ and $q_n$ such that $a = \lim p_n$ and $b = \lim q_n$, and obtain as above the sub-shifts for the pairs $\sqrt{p_n}$ and $\sqrt{q_n}$. \  Each comes with a Berger measure, say $\mu_n$. \  Because of weak-* compactness, we may choose a weak-* convergent subsequence of the $\mu_n$, converging to some measure $\mu$. \ It is easy to check that $\mu$ is a probability measure and yields the correct first two weights $\sqrt{a}$ and $\sqrt{b}$.

Consider now the third weight of one of the approximating subshifts. \  It will have the ratio $\frac{G_2}{G_1}$ no smaller than $\frac{1-b}{1-2a+b}$, and hence these ratios for all the approximating Agler sub-shifts will be bounded away from zero. \  Since weight information is moment information, and hence integrals against $t$, $t^2$, and $t^3$ of the appropriate measure, this will be preserved in the measure limit. \  Thus the measure $\mu$ will yield a weighted shift with the gap ratio $\frac{G_2}{G_1}$ non-zero, and hence the third weight will not be equal to the second, and the resulting shift cannot be flat. \ It is evident that the procedure yields a contraction, and thus it will be $\mathcal{MID}$ by Proposition \ref{prop:wstarlimsofIDBerger}. \   This gives the desired result.  
\end{proof}

We pose some natural questions.
\begin{question}  \label{question:3wtscomp}
What condition on three initial weights guarantees an $\mathcal{MID}$ completion?  What condition is necessary and sufficient for such a completion?
\end{question}

\medskip

There is some information about the first of these questions. \ Recall that an operator $T$ is completely hyperexpansive if, for all $n = 1, 2, \ldots$, $\sum_{i=0}^n (-1)^i \binom{n}{i} {T^*}^i T^i \leq 0$. \  Since if $W_\alpha$ is completely hyperexpansive the shift with weight sequence the reciprocals of the $\alpha_j$ is $\mathcal{MID}$ (\cite[Corollary 4.1]{BCE}), we may use conditions from \cite{JJKS} to obtain a variety of sufficient conditions for completions. \  We content ourselves with the following, which is just reciprocals inserted into \cite[Proposition 5.2]{JJKS}.

\begin{prop}  \label{prop:3wtsviaCHE}
Consider three initial weights $0< \alpha_0 < \alpha_1 < \alpha_2 < 1$. \  If
\begin{description}
  \item[i)] $\frac{1}{\alpha_1^2} \frac{1}{\alpha_2^2} - 2 \frac{1}{\alpha_1^2} + 1 \leq 0$ and
  \item[ii)] $\alpha_2^2(1-\alpha_1^2)^2 \le (1-\alpha_0^2)(1-\alpha_2^2)\alpha_1^2$, 
\end{description}
then $\alpha_0, \alpha_1, \alpha_2$ has an extension to the weight sequence of an $\mathcal{MID}$ weighted shift.
\end{prop}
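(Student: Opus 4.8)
The plan is to exploit the reciprocal duality between moment infinite divisibility and complete hyperexpansivity recorded just above the statement: by \cite[Corollary 4.1]{BCE}, if $W_\beta$ is completely hyperexpansive then the shift whose weights are the reciprocals $1/\beta_n$ is $\mathcal{MID}$. Accordingly, I would set $\beta_j := 1/\alpha_j$ for $j = 0,1,2$, so that the hypothesis $0 < \alpha_0 < \alpha_1 < \alpha_2 < 1$ becomes $\beta_0 > \beta_1 > \beta_2 > 1$, which is the correct regime of weights for a completely hyperexpansive shift. The problem then reduces to producing a completely hyperexpansive completion of the three weights $\beta_0, \beta_1, \beta_2$ and reciprocating it.

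The heart of the argument is to check that conditions i) and ii), stated in the $\alpha_j$, are exactly the hypotheses of \cite[Proposition 5.2]{JJKS} once rewritten in the $\beta_j$. First I would observe that i) is nothing but a $2$-alternating test for the moment sequence $\delta$ of $W_\beta$ (where $\delta_0 = 1$ and $\delta_n = \prod_{j=0}^{n-1}\beta_j^2$): indeed the test $\delta_1 - 2\delta_2 + \delta_3 \le 0$ factors as $\beta_0^2(1 - 2\beta_1^2 + \beta_1^2\beta_2^2) \le 0$, i.e. $\beta_1^2\beta_2^2 - 2\beta_1^2 + 1 \le 0$, which upon substituting $\beta_j^2 = 1/\alpha_j^2$ is precisely i). Next I would substitute $\alpha_j^2 = 1/\beta_j^2$ into ii); writing $1 - \alpha_j^2 = (\beta_j^2 - 1)/\beta_j^2$ and clearing denominators, condition ii) becomes $\beta_0^2(\beta_1^2 - 1)^2 \le \beta_1^2(\beta_0^2 - 1)(\beta_2^2 - 1)$, the remaining JJKS inequality governing the higher differences of the moments. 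Thus i) and ii) together are the verbatim reciprocal form of the JJKS completion criterion.

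With the translation in hand, I would apply \cite[Proposition 5.2]{JJKS} to obtain a completely hyperexpansive weighted shift $W_\beta$ extending $\beta_0, \beta_1, \beta_2$. By \cite[Corollary 4.1]{BCE} the reciprocal shift, with weights $1/\beta_n$, is then $\mathcal{MID}$; its first three weights are $1/\beta_0 = \alpha_0$, $1/\beta_1 = \alpha_1$, $1/\beta_2 = \alpha_2$, so it is the desired extension. Finally, since a completely hyperexpansive shift has all weights $\ge 1$, its reciprocals lie in $(0,1]$ and the resulting $\mathcal{MID}$ shift is automatically contractive, consistent with the assertion.

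The only genuine obstacle is the bookkeeping in the second paragraph: one must verify that the algebraic manipulation of i) and ii) reproduces the JJKS hypotheses \emph{exactly}, rather than merely implying them, and that the indexing (which moment-difference test corresponds to which inequality) is aligned with the conventions of \cite{JJKS}. Everything downstream of that verification is a direct appeal to the two cited results.
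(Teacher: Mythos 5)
Your proposal is correct and follows exactly the paper's route: the paper's proof consists precisely of the remark that the statement is ``just reciprocals inserted into \cite[Proposition 5.2]{JJKS},'' combined with \cite[Corollary 4.1]{BCE} to pass from a completely hyperexpansive completion of the reciprocal weights back to an $\mathcal{MID}$ completion. Your algebraic translation of i) and ii) into the $\beta_j = 1/\alpha_j$ variables is the bookkeeping the paper leaves implicit, and it checks out.
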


Since it is known that not every $\mathcal{MID}$ shift has a weight sequence whose reciprocals yield a completely hyperexpansive shift (see the discussion after \cite[Proposition 6]{At}), it is unsurprising that the condition above is not necessary.

\begin{ex}
Consider the proposed initial weight sequence $\sqrt{\frac{1}{3}}, \sqrt{\frac{2}{4}},\sqrt{\frac{3}{5}}$ (the first three weights of the third Agler shift). \  These weights fail condition (i) in Proposition \ref{prop:3wtsviaCHE}, yet clearly have a completion to an $\mathcal{MID}$ weight sequence. 
\end{ex}

We do not know whether it is possible to have a three weight initial segment which can be completed  in three ways:  (i) to subnormal but not $\mathcal{MID}$, (ii) to $\mathcal{MID}$ but not the reciprocal of a completely hyperexpansive shift, and (iii) to such a reciprocal.

Another sort of rigidity concerns an analog to a feature of the standard routes to subnormality of a shift, namely $k$-hyponormality and $n$-contractivity. \  Various sorts of ``extremality'' (in the sense that one of the standard positivity tests is actually achieved at zero) yield that the shift must be recursively generated (have finitely atomic Berger measure) or sometimes something even more restrictive. \  Perhaps the oldest result is that a subnormal weighted shift with two successive weights equal must be ``flat'' (all or all but one weight equal);  this is \cite[Theorem 6]{StJ}. \  This was later improved to shifts only assumed to be $2$-hyponormal in \cite[Corollary 6]{Cu}, and further to shifts assumed only to be quadratically hyponormal in \cite[Theorem 1]{Ch}. \  In \cite[Theorem 5.12, Proposition 5.13]{CF1} is the result that if, for some $k$,  $\det A(j,k) =0$  for all $j \geq 0$ then the shift must be recursively generated. \  In \cite[Theorem 2.3]{EJP} we see that if a contractive shift $W$ satisfies $A_n(W) = 0$ for some $n$, then $W$ must be the unweighted unilateral shift. \ (Recall the definition of $A_n(T)$, given in (\ref{eq11}).)

The following result is a moment infinite divisibility version of this same flavor.

\begin{prop}  \label{prop:LTeq0Flat} Suppose $W_\alpha$ is a contractive weighted shift which is $\mathcal{MID}$ (so its moment sequence is log completely monotone and its weight sequence is log completely alternating). \  Suppose further that, for some $n$ and $k$, $LT_\alpha(n,k) = 0$. \  Then $W_\alpha$ is flat.
\end{prop}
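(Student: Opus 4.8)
The plan is to reduce the statement to a rigidity property of completely monotone sequences via the Hausdorff moment representation. Since $W_\alpha \in \mathcal{MID}$, by Theorem \ref{th:BCEmomlogID} the sequence $b := (\ln \alpha_j)_{j=0}^\infty$ is completely alternating, that is $(\nabla^m b)_j \le 0$ for all $m \ge 1$ and $j \ge 0$. I would then pass to the first-difference sequence $d_j := b_{j+1} - b_j = \ln(\alpha_{j+1}/\alpha_j)$. Because $W_\alpha$ is subnormal, hence hyponormal, the weights are non-decreasing and $d_j \ge 0$; and since $d = -\nabla b$ gives $(\nabla^m d)_j = -(\nabla^{m+1} b)_j \ge 0$ for every $m \ge 0$, the sequence $d$ is \emph{completely monotone}. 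Finally the hypothesis translates cleanly: $LT_\alpha(n,k) = (\nabla^n b)_k = -(\nabla^{n-1} d)_k$, so the assumption $LT_\alpha(n,k) = 0$ is exactly $(\nabla^{N} d)_k = 0$ with $N := n-1 \ge 0$.

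Next I would invoke Hausdorff's theorem: a completely monotone sequence is a moment sequence, so there is a finite positive measure $\rho$ on $[0,1]$ with $d_j = \int_{[0,1]} t^j \, d\rho(t)$. A short induction gives $(\nabla^N d)_k = \int_{[0,1]} t^k (1-t)^N \, d\rho(t)$, whence the hypothesis reads $\int_{[0,1]} t^k (1-t)^N \, d\rho(t) = 0$. Since the integrand is nonnegative on $[0,1]$, the measure $\rho$ must be concentrated on the zero set of $t^k(1-t)^N$, which is contained in $\{0,1\}$ (unless $k = N = 0$, in which case the integrand is identically $1$, forcing $\rho = 0$ and $d \equiv 0$). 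This is the crucial step: a single pointwise equality at $(n,k)$ is promoted, through the integral representation, into a global constraint on $\supp \rho$.

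With $\rho$ supported in $\{0,1\}$, the values of $d$ are determined for all indices at once: $d_j = \rho(\{1\}) \cdot 1^j + \rho(\{0\}) \cdot 0^j = \rho(\{1\})$ for every $j \ge 1$. Thus $d$ is constant, say equal to $C := \rho(\{1\}) \ge 0$, on $\{1,2,\ldots\}$; equivalently $\alpha_{j+1}/\alpha_j = e^{C}$ for all $j \ge 1$. Here contractivity enters: the weights are bounded above by $1$ and non-decreasing, so they cannot grow geometrically with ratio $e^{C} > 1$; hence $C = 0$ and $\alpha_1 = \alpha_2 = \alpha_3 = \cdots$. Together with $\alpha_0 \le \alpha_1$ (again from non-decreasing weights) this is precisely flatness.

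I expect the main obstacle to be the passage from a \emph{local} equality to \emph{global} constancy starting at index $1$. A purely finite-difference propagation argument (using that each $\nabla^j d$ is completely monotone, hence nonnegative and non-increasing, so the vanishing at $k$ cascades forward) yields flatness only from index $k$ onward and stalls there; it is the Hausdorff representation that converts the vanishing into the support condition $\supp \rho \subseteq \{0,1\}$ and thereby forces constancy from the very first index. After that, the only remaining work is the elementary observation that a bounded, non-decreasing weight sequence cannot sustain a positive constant log-ratio, which pins the residual constant to zero.
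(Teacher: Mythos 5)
Your proof is correct, and it takes a genuinely different route from the paper's. The paper argues by induction on $n$: the base cases $n=1,2$ reduce to a weight equal to $1$ or to two equal weights (invoking the classical propagation result that a subnormal shift with two consecutive equal weights is flat), and the inductive step passes to the quotient shift $\beta_k=\alpha_k/\alpha_{k+1}$, which is again $\mathcal{MID}$ by Theorem \ref{cor:simplyweightquotient} and satisfies a vanishing condition of one lower order; flatness of the quotient shift forces the $\alpha_j$ to grow geometrically, and boundedness pins the ratio at $1$. You instead make a single structural move: the sequence $d_j=\ln(\alpha_{j+1}/\alpha_j)$ is completely monotone (being $-\nabla$ of the completely alternating sequence $\ln\alpha$), so Hausdorff's theorem gives $d_j=\int_{[0,1]}t^j\,d\rho(t)$, the hypothesis becomes $\int t^k(1-t)^{n-1}\,d\rho=0$, and positivity of the integrand forces $\supp\rho\subseteq\{0,1\}$, hence $d_j$ constant for $j\ge 1$; the same boundedness argument then kills the constant. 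What your approach buys is economy and uniformity: you avoid both the quotient machinery and the external Stampfli-type propagation theorem (your $N=0$, $k\ge 1$ case rederives it for $\mathcal{MID}$ shifts), and the ``local equality implies global support constraint'' mechanism is transparent. What it costs is the appeal to the Hausdorff moment representation, whereas the paper's induction stays entirely inside the elementary finite-difference/quotient framework it has already built. One small point worth making explicit if you write this up: the paper's definition of completely monotone only quantifies over $n\ge 1$, so you should note (as you implicitly do via hyponormality) that $d_j\ge 0$ as well, which is what the standard form of Hausdorff's theorem requires; this is automatic here since $(\nabla^0 d)_j=-(\nabla^1\ln\alpha)_j\ge 0$.
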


\begin{proof}
Suppose $n$ and $k$ are as in the statement, and consider first the case $n = 1$;  we have that $\log \gamma_k - \log \gamma_{k+1} = 0$. \  But this is $ - \log \alpha_k^2 = 0$, and so $\alpha_k = 1$, and since the weights are increasing and bounded above by $1$ we clearly have flatness. \  In the case that $n=2$, we have $\log \gamma_k - 2 \log \gamma_{k+1} + \log \gamma_{k+2} = 0$, which is easily equivalent to $\alpha_{k+1} = \alpha_k$ and again we have flatness.

Suppose now that $n=3$. \  We have
$$\log \gamma_k - 3 \log \gamma_{k+1} + 3 \log \gamma_{k+2} - \log \gamma_{k+3} = 0,$$
which is equivalent to
$$\frac{\alpha_k^2}{\alpha_{k+1}^2} = \frac{\alpha_{k+1}^2}{\alpha_{k+2}^2}.$$
Considering the shift $W_\beta$ with $\beta_k = \frac{\alpha_k}{\alpha_{k+1}}$, and since by Corollary \ref{cor:simplyweightquotient} $W_\beta$ is subnormal, we have that $W_\beta$ is flat. \  Therefore $\beta_{k} = \beta_{k+1} = \cdots = c$, for some $c$ satisfying $0 < c \leq 1$.

Therefore we have
$$c = \frac{\alpha_k^2}{\alpha_{k+1}^2} = \frac{\alpha_{k+1}^2}{\alpha_{k+2}^2} = \frac{\alpha_{k+2}^2}{\alpha_{k+3}^2} = \frac{\alpha_{k+3}^2}{\alpha_{k+4}^2} = \cdots .$$
Then  $\alpha_{k+2}^2 = \frac{1}{c} \alpha_{k+1}^2$. \  Then since $\frac{\alpha_{k+2}^2}{\alpha_{k+3}^2} = c$, we obtain that
$$\alpha_{k+3}^2 = \frac{1}{c}\alpha_{k+2}^2 = \frac{1}{c^2}\alpha_{k+1}^2.$$
Repeating the computation, we have that
$$\alpha_{k+j}^2 = \frac{1}{c^{j-1}}\alpha_{k+1}^2.$$
If $c < 1$, clearly the sequence $(\alpha_{k+j})_{j=1}^\infty$ is unbounded, a contradiction, so $c = 1$ and we have $\alpha_{k+2} = \alpha_{k+1}$ yielding flatness for $W_\alpha$, using again that $W_\alpha$ is subnormal. \  Observe that we have shown both the claim for $n=3$ and that if $W_\beta$ is flat, then $W_\alpha$ is flat in general.

The proof now finishes by induction, since if $n=4$ we deduce from $LT_\alpha(4,k) = 0$ that $LT_\beta(3,k) = 0$, that $W_\beta$ is flat by the result for $n=3$, hence $W_\alpha$ is flat, and so on.  
\end{proof}

Subnormality (instead of moment infinite divisibility) plus $LT_\alpha(n,k) = 0$ for some $n$ and $k$ is not sufficient to guarantee flatness, as shown by the following example.

\begin{ex}
Consider the weighted shift $W_{(a,b,c)}$ which is the Stampfli subnormal completion of the initial weights $a<b<c$, where $b:=\sqrt{ac}$. \ Using (\ref{eq1}), one easily checks that $LT_{\gamma}(3,0) = -LT_{\alpha^2}(2,0) = 0$. \ Moreover, $\left\|W_{(a,b,c)}\right\| \le \sqrt{c(a+c)}$. \ However, the shift is not flat. 
\end{ex}

We now record another collection of moment infinitely divisible weighted shifts (in fact, ones whose weights squared are completely alternating instead of merely log completely alternating). \  It is well known that the Agler shifts are simply representatives of a more general class of shifts with Berger measures $d \mu(t) = (j-1) (1-t)^{j-2} \chi_{[0,1]}(t) \, d t$, where $j > 1$;  the Agler shifts are simply those for which $j$ is an integer. \  Unsurprisingly, the others in the class are moment infinitely divisible.

\begin{prop}
Let $s > 1$ (not necessarily an integer) and let $W$ be the weighted shift with Berger measure $d \mu(t) = (s-1) (1-t)^{s-2} \chi_{[0,1]} (t)\, d t$. \  The weights squared $(\alpha^{(s)}_{n})^2$ of $W$ are completely alternating and therefore $W$ is $\mathcal{MID}$.
\end{prop}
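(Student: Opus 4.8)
The plan is to compute the weights squared in closed form, and then recognize them as a constant minus a positive multiple of an explicitly completely monotone sequence.

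First I would compute the moments from the given Berger measure. Because $\mu$ has a Beta-type density (and $s>1$ makes $(1-t)^{s-2}$ integrable at $t=1$, with total mass $1$), for each $n \ge 0$
\[
\gamma_n = \int_0^1 t^n (s-1)(1-t)^{s-2}\,dt = (s-1)\,B(n+1,s-1) = \frac{n!\,\Gamma(s)}{\Gamma(n+s)},
\]
using $B(x,y)=\Gamma(x)\Gamma(y)/\Gamma(x+y)$ together with $(s-1)\Gamma(s-1)=\Gamma(s)$; in particular $\gamma_0 = 1$. Forming the ratio then gives the weights squared $a_n := (\alpha^{(s)}_n)^2$ in closed form,
\[
a_n = \frac{\gamma_{n+1}}{\gamma_n} = \frac{n+1}{n+s},
\]
which recovers the Agler weights when $s$ is an integer and satisfies $a_n < 1$ for every $n$ (since $s>1$), so that $W$ is a contraction — a fact I will need at the end.

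The crux is the decomposition
\[
a_n = 1 - (s-1)\,c_n, \qquad c_n := \frac{1}{n+s} = \int_0^1 t^{\,n+s-1}\,dt.
\]
Here $c=(c_n)$ is a Hausdorff moment sequence — the moments of the positive measure $t^{s-1}\,dt$ on $[0,1]$ — and hence completely monotone; explicitly,
\[
(\nabla^m c)_k = \int_0^1 t^{k}(1-t)^{m}\, t^{s-1}\,dt \ge 0 \qquad (m,k \ge 0).
\]
Applying $\nabla^m$ with $m \ge 1$ to the decomposition, the constant $1$ contributes nothing, so
\[
(\nabla^m a)_k = -(s-1)\,(\nabla^m c)_k \le 0,
\]
the sign being forced by $s-1 > 0$. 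This is precisely the assertion that the weights squared $(a_n)$ are completely alternating.

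To conclude moment infinite divisibility, I would invoke that a positive-term completely alternating sequence is log completely alternating (\cite[Prop. 2.4]{BCE}); since $W$ is contractive, Theorem \ref{th:BCEmomlogID} then yields $W \in \mathcal{MID}$. I do not expect a genuine obstacle here: the only care needed is the integrability of the density at $t=1$ (supplied by $s>1$) and the observation that it is exactly the positivity of $s-1$ that converts the complete monotonicity of $(c_n)$ into the complete alternation of the weights squared.
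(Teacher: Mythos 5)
Your proof is correct and follows essentially the same route as the paper: both compute the weights squared as $\frac{n+1}{n+s}$ and then show that the iterated differences are nonpositive precisely because $s>1$, before invoking completely alternating $\Rightarrow$ log completely alternating $\Rightarrow$ $\mathcal{MID}$. The only cosmetic difference is that the paper asserts the closed form $(\nabla^{m}(\alpha^{(s)})^2)_n = (1-s)\,m!/\prod_{i=0}^{m}(s+n+i)$ directly as ``a ready computation,'' whereas you derive the same quantity and its sign by writing $\frac{1}{n+s}$ as the Hausdorff moment $\int_0^1 t^{n+s-1}\,dt$ and using the integral representation of $\nabla^m$ --- a tidy justification of the step the paper leaves to the reader.
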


\begin{proof}
One computes readily that the weights squared are $(\alpha^{(s)}_{n})^2 = \frac{1+n}{s+n}$ and that 
$(\nabla^{m} (\alpha^{(s)})^2)_n = \frac{(1-s) m!}{\prod_{i=0}^{m}(s+n+i)}$, and these are clearly negative.
\end{proof}

\medskip

\noindent \textbf{Acknowledgments.} \ The authors wish to express their appreciation for support and warm hospitality during various visits, which materially aided this work, to Bucknell University, the University of Iowa, and the Universit\'{e} des Sciences et Technologies de Lille, and particularly the Mathematics Departments of these institutions. \ Several examples in this paper were obtained using calculations with the software tool \textit{Mathematica} \cite{Wol}.

\end{document}